\newcommand\redout{\bgroup\markoverwith
{\textcolor{red}{\rule[.5ex]{2pt}{0.4pt}}}\ULon}
\newtheorem{theorem}{Theorem}[section]
\newtheorem{proposition}[theorem]{Proposition}
\newtheorem{lemma}[theorem]{Lemma}
\newtheorem{corollary}[theorem]{Corollary}
\newtheorem{definition}[theorem]{Definition}
\newtheorem{notation}[theorem]{Notation}
\newenvironment{proof}{{\noindent \sc Proof.}}{\hfill $\Qed$\\}
\newcommand{\ths}{\theta^*}
\newcommand{\Qed}{\rule{2.5mm}{3mm}}
\newcommand{\Ga}{\Gamma}
\newcommand{\RR}{\mathbb{R}}
\DeclareMathOperator{\mat}{Mat}
\newcommand{\MX}{\mat_X(\RR)}
\newcommand{\e}{E^{*}}
\newcounter{case}
\renewcommand{\thecase}{\arabic{case}}
\newcounter{subcase}
\numberwithin{subcase}{case}
\begin{document}


\begin{center}
{\bf\Large On the $Q$-polynomial property of the full bipartite graph of a Hamming graph} \\ [+4ex]
Blas Fernández{\small$^{a,b}$},  
Roghayeh Maleki{\small$^{a,b}$},  
\v Stefko Miklavi\v c{\small$^{a, b,c}$},
\\
Giusy Monzillo{\small$^{a, b}$} 
\\ [+2ex]
{\it \small 
$^a$University of Primorska, UP IAM, Muzejski trg 2, 6000 Koper, Slovenia\\
$^b$University of Primorska, UP FAMNIT, Glagolja\v ska 8, 6000 Koper, Slovenia\\
$^c$IMFM, Jadranska 19, 1000 Ljubljana, Slovenia}
\end{center}

\begin{abstract}
The $Q$-polynomial property is an algebraic property of distance-regular graphs, that was introduced by Delsarte in his study of coding theory. Many distance-regular graphs admit the $Q$-polynomial property. Only recently the $Q$-polynomial property has been generalized to  graphs that are not necessarily distance-regular. In \cite{attenuated} it was shown that graphs arising from the Hasse diagrams of the so-called attenuated space posets are $Q$-polynomial. These posets could be viewed as $q$-analogs of the Hamming posets, which were not studied in \cite{attenuated}. The main goal of this paper is to fill this gap by showing that the graphs arising from the Hasse diagrams of the Hamming posets are $Q$-polynomial. 
\end{abstract}
\begin{quotation}
\noindent {\em Keywords:} 
Hamming graph, Terwilliger algebra, $Q$-polynomial property. 

\end{quotation}

\begin{quotation}
\noindent 
{\em Math. Subj. Class.:}  05E99, 05C50.
\end{quotation}


\section{Introduction}
\label{sec:intro}

Distance-regular graphs represent a significant class of finite, undirected, connected graphs within Algebraic Combinatorics, a field extensively explored in references \cite{bannai-ito, bannai2021algebraic, BCN, DKT}. These graphs exhibit a remarkable combinatorial regularity that can be analyzed using various algebraic methodologies, including linear algebra techniques such as exploring eigenvalues/eigenvectors of the adjacency matrix \cite[Section 4.1]{BCN}, tridiagonal pairs \cite{ITT}; geometric approaches like linear programming bounds \cite{Del} and root systems \cite[Chapter 3]{BCN}; special functions such as orthogonal polynomials \cite{Leo,ter4},  and hypergeometric series \cite[Chapter 3]{bannai-ito}; and representation theory, where structures like the subconstituent algebra \cite{Terpart1, TerpartII, Terpart3, ter2} and the $q$-Onsager algebra \cite{Ito,IT} play pivotal roles.

Many distance-regular graphs admit an algebraic property called the $Q$-polynomial property.
This property was introduced by Delsarte in his seminal work on coding theory \cite{Del}, and extensively investigated thereafter \cite{bannai-ito, bannai2021algebraic, BCN, DKT, Terpart1, TerpartII, Terpart3}. 

Let $\Ga$ be a $Q$-polynomial distance-regular graph. For each vertex $x$ of $\Ga$ there exists a certain diagonal matrix $A^{*}= A^{*}(x)$, known as the {\em dual adjacency matrix of} $\Ga$ {\em with respect to} $x$. 
The eigenspaces of $A^{*}$ are the subconstituents of $\Ga$ with respect to $x$. The adjacency matrix $A$ of $\Ga$ and the dual adjacency matrix $A^{*}$ are related as follows: the matrix $A$ acts on the eigenspaces of  $ A^{*}$ in a (block) tridiagonal fashion, and $A^*$ acts on the eigenspaces of $A$ likewise \cite[Section 13]{ter2}. 
In \cite{projective},  Terwilliger used this property of $Q$-polynomial distance-regular graphs to  extend the $Q$-polynomial property to graphs that are not necessarily distance-regular. 
To do this,  (i) he dropped the assumption that $\Ga$ is distance-regular; (ii) he dropped the assumption that every vertex of $\Ga$ has a dual adjacency matrix (instead he required that one distinguished vertex of $\Ga$ has a dual adjacency matrix); (iii) he replaced the adjacency matrix of $\Ga$ by a weighted adjacency matrix. 

As the $Q$-polynomial property of general graphs has only been defined recently, there are not a lot of examples of non-distance-regular graphs that are $Q$-polynomial. To our best knowledge, there are only two (infinite) families of $Q$-polynomial graphs, available in the literature, that are not distance-regular.  The first one is a family of graphs associated with the projective geometry $L_N(q)$, see \cite{projective}. The second one is a family of graphs arising from the Hasse diagrams of the so-called {\it Attenuated Space posets} ${\cal A}_q(N,M)$ \cite{attenuated}. The attenuated space posets could be viewed as $q$-analogs of the Hamming posets, which were not studied in \cite{attenuated}. The main goal of this paper is to fill this gap by showing that the Hasse diagram of the Hamming poset (viewed as an undirected graph) is $Q$-polynomial. To describe our result we first give an alternative definition of these graphs. 

Let $\Ga$ denote a connected graph with vertex set $X$ and edge set ${\mathcal E}$. Fix a vertex $x \in X$, and define $\mathcal{E}_f = \mathcal{E} \setminus \{yz \mid \partial(x,y) = \partial(x,z)\}$. Observe that the graph $\Ga_f=(X,\mathcal{E}_f)$ is bipartite. The graph $\Ga_f $ is called the {\em full bipartite graph of} $\Ga$ {\em with respect to} $x$. Let $D, n\geq2$ denote positive integers and $S$ be a set with $n$ elements. The {\em Hamming graph} $H(D,n)$ has vertex set $S^D$ (that is, the set of the ordered $D$-tuples of elements of $S$, or sequences of length $D$ from $S$). Two vertices are adjacent if and only if they differ in precisely one coordinate; that is, if their Hamming distance is one. The main result of this paper is that the full bipartite graph of the Hamming graph $H(D,n)$ is $Q$-polynomial. 

General ideas of our proofs follow ideas from \cite{attenuated}. In particular, a key feature that allows us to provide a $Q$-polynomial structure for $H(D,n)_f$ is a certain equation involving adjacency matrix $A$ of $H(D,n)_f$  and a certain diagonal matrix $A^*$ (which will later turn out to be a dual adjacency matrix of $H(D,n)_f$) - see \cite[Proposition 8.6]{attenuated} and Theorem \ref{thm:cubic} of this paper. However to prove this result we utilize a different tool - we extensively use the fact, that $H(D,n)_f$ is uniform in the sense of Terwilliger \cite{OldTer}. In our future paper we plan to generalize this approach to an arbitrary uniform graph.  To prove that $H(D,n)_f$ is $Q$-polynomial, we also compute the eigenvalues of  $H(D,n)_f$, together with their multiplicities, which is a result of independent interest. As already mentioned above, there are only two (infinite) families of $Q$-polynomial graphs, available in the literature, that are not distance-regular. Therefore, at this stage of research, providing additional examples of such graphs is also very relevant, and so this is yet another contribution of this paper.

Our paper is organized as follows. We recall preliminary definitions and results in Sections \ref{sec:ter}, \ref{sec:lfr}, and \ref{sec:ham}. In Section \ref{sec:ham}, we also define our candidate for the dual adjacency matrix $A^*$ of the full bipartite graph of $H(D,n)$. We prove that the adjacency matrix $A$ of the full bipartite graph of $H(D,n)$ and the matrix $A^*$ satisfy a certain equation (which is cubic in $A$ and linear in $A^*$) in Section \ref{sec:cubic}. We then compute the eigenvalues of $A$ in Section \ref{sec:eig}. In Section \ref{sec:main}, we prove our main result.


\section{Terwilliger algebras and $Q$-polynomial property}
\label{sec:ter}

Throughout this paper, all graphs will be finite, simple, connected, and undirected. Let $\Ga=(X,\mathcal{E})$ denote a graph with vertex set $X$ and edge set $\mathcal{E}$. In this section, we recall the $Q$-polynomial property of $\Ga$ and the definition of a Terwilliger algebra. Let $\partial$ denote the path-length distance function of $\Ga$. The diameter $D$ of $\Ga$ is defined as $D=\max\{\partial(x,y) \mid x,y \in X \}$. Let $\MX$ denote the matrix algebra over the real numbers $\RR$, consisting of all matrices whose rows and columns are indexed by $X$. Let $I \in \MX$ denote the identity matrix and $V$ denotes the vector space over $\RR$ consisting of all column vectors whose coordinates are indexed by $X$. We observe that $\MX$ acts on $V$ by left multiplication. We call $V$ the \emph{standard module}. For any $y \in X$, let $\widehat{y}$ denote the element of $V$ with $1$ in the ${y}$-coordinate and $0$ in all other coordinates. We observe that $\{\widehat{y}\;|\;y \in X\}$ is a basis for $V$. 

\begin{definition}
	\label{def:wam}
	By a weighted adjacency matrix of $\Ga$ we mean a matrix $A \in \MX$ that has $(z,y)$-entry given by
\begin{eqnarray*}
	(A)_{z y} = \left\{ \begin{array}{lll}
		\ne 0 & \hbox{if } \; \partial(z,y)=1, \\
		0 & \hbox{if } \; \partial(z,y) \ne 1 \end{array} \right. 
	\qquad (y \in X).
\end{eqnarray*}
\end{definition}
For the rest of this section, we fix a weighted adjacency matrix $A$ of $\Ga$ that is diagonalizable over $\RR$. Let $M$ denote the subalgebra of $\MX$ generated by $A$. The algebra $M$ is called the \emph{adjacency algebra} of the graph $\Gamma$, generated by $A$. Observe that $M$ is commutative. Let ${\cal D}+1$ denote the dimension of the vector space $M$. Since $A$ is diagonalizable, the vector space $M$ has a
basis $\{E_i\}_{i=0}^{\cal D}$ such that $\sum_{i=0}^{\cal D} E_i=I$ and $E_i E_j = \delta_{i,j} E_i$ for $0 \le i, j \le {\cal D}$. We call $\{E_i\}_{i=0}^{\cal D}$ \emph{the primitive idempotents of} $A$. Since $A \in M$, there exist real numbers $\{\theta_i\}_{i=0}^{\cal D}$ such that $A = \sum_{i=0}^{\cal D} \theta_i E_i$. The scalars $\{\theta_i\}_{i=0}^{\cal D}$ are mutually distinct since $A$ generates $M$. We have
$A E_i = \theta_i E_i = E_i A$ for $0 \le i \le {\cal D}$. Note that
$$
  V = \sum_{i=0}^{\cal D} E_i V \qquad \qquad \text{(direct sum)}.
$$
For $0 \le  i \le  {\cal D}$  the subspace $E_i V$ is an eigenspace of $A$, and $\theta_i$ is the corresponding eigenvalue. For notational convenience, we assume $E_{i} = 0$ for $i < 0$ and $i > {\cal D}$.

Next we discuss the dual adjacency algebras of $\Ga$. To do that, we fix a vertex $x \in X$ for the rest of this section. Let $\varepsilon=\varepsilon(x)$ denote the eccentricity of $x$, that is, $\varepsilon = \max \{\partial(x,y) \mid y \in X\}$. For $ 0 \le i \le \varepsilon$, let $E_i^*=E_i^*(x)$ denote the diagonal matrix in $\MX$ with $(y,y)$-entry given by
\begin{eqnarray*}
	\label{den0}
	(\e_i)_{y y} = \left\{ \begin{array}{lll}
		1 & \hbox{if } \; \partial(x,y)=i, \\
		0 & \hbox{if } \; \partial(x,y) \ne i \end{array} \right. 
	\qquad (y \in X).
\end{eqnarray*}
We call $\e_i$ the \emph{$i$-th dual idempotent} of $\Gamma$ with respect to $x$ \cite[p.~378]{Terpart1}. We observe 
(ei)  $\sum_{i=0}^\varepsilon E_i^*=I$; 
(eii) $E_i^{*\top} = E_i^*$ $(0 \le i \le \varepsilon)$; 
(eiii) $E_i^*E_j^* = \delta_{ij}E_i^* $ $(0 \le i,j \le \varepsilon)$,
where $I$ denotes the identity matrix of $\MX$. It follows that $\{E_i^*\}_{i=0}^\varepsilon$ is a basis for a commutative subalgebra $M^*=M^*(x)$ of $\MX$. The algebra $M^*$ is called the \emph{dual adjacency algebra} of $\Gamma$ with respect to $x$ \cite[p.~378]{Terpart1}. Note that for $0 \le i \le \varepsilon$ we have
$\e_i V = {\rm Span} \{ \widehat{y} \mid y \in X, \partial(x,y)=i\}$, 
and  
\begin{equation}
	\label{vsub}
	V = E_0^*V + E_1^*V + \cdots + E_\varepsilon^*V \qquad \qquad {\rm (direct\ sum}). \nonumber 
\end{equation}
The subspace $\e_i V$ is known as the \emph{$i$-th subconstituent of $\Gamma$ with respect to $x$}. 
For convenience we set $\e_{i}=0$ for $i<0$ and $i>\varepsilon$.
By the triangle inequality, for adjacent $y, z \in X$ the distances $\partial(x, y)$ and $\partial(x, z)$
differ by at most one. Consequently,
$$
  AE^*_iV \subseteq E^*_{i-1} V + E^*_i V + E^*_{i+1} V  \qquad (0 \le i \le \varepsilon). 
$$

Next we discuss the $Q$-polynomial property.

\begin{definition}
	\label{def:q1}
	{\rm (See \cite[Definition 20.6]{ter2}.)} 
	A matrix $A^* \in \MX$ is called a dual adjacency matrix of $\Ga$ (with respect to $x$ and the ordering 
	$\{E_i\}_{i=0}^{\cal D}$ of the primitive idempotents) whenever $A^*$ generates $M^*$ and
	$$
	  A^* E_i V \subseteq E_{i-1} V + E_i V + E_{i+1} V \qquad (0 \le i \le {\cal D}). 
	$$
\end{definition}

\begin{definition}
	\label{def:q2}
	{\rm (See \cite[Definition 20.7]{ter2}.) }
		We say that the ordering $\{E_i\}_{i=0}^{\cal D}$ is $Q$-polynomial
		with respect to $x$ whenever there exists a dual adjacency matrix of $\Ga$ with respect to $x$ and
		$\{E_i\}_{i=0}^{\cal D}$.
	\end{definition}

\begin{definition}
	\label{def:q3}
	{\rm (See \cite[Definition 20.8]{ter2}.) }
		We say that $A$ is $Q$-polynomial with respect to
		$x$ whenever there exists an ordering of the primitive idempotents of $A$ that is $Q$-polynomial
		with respect to $x$.
	\end{definition}

Assume that $\Ga$ has a dual adjacency matrix $A^*$ with respect to $x$ and $\{E_i\}_{i=0}^{\cal D}$. Since $A^* \in M^*$, there exist real numbers $\{\ths_i\}_{i=0}^\varepsilon$ such that $A^* = \sum_{i=0}^\varepsilon \ths_i E^*_i$. The scalars $\{\ths_i\}_{i=0}^\varepsilon$ are mutually distinct since $A^*$ generates $M^*$. We have $A^* E^*_i = \ths_i E^*_i = E^*_i A^*$ for $0 \le i \le \varepsilon$. 
We mentioned earlier that the sum $V = \sum_{i=0}^\varepsilon E^*_i V$ is direct. For $0 
\le i \le \varepsilon$ the subspace $E^*_i V$ is an eigenspace of $A^*$, and $\ths_i$ is the corresponding eigenvalue. As we investigate the $Q$-polynomial property, it is helpful to bring in the Terwilliger algebra
\cite{Terpart1,TerpartII,Terpart3}. The following definition is a variation on \cite[Definition 3.3]{Terpart1}.

The \emph{Terwilliger algebra of $\Gamma$ with respect to $x$ and $A$}, denoted by $T=T(x,A)$,  is the  subalgebra of $\MX$ generated by $M$ and $M^*$. Observe that $T$ is generated by the weighted adjacency matrix $A$ and the dual idempotents $E^*_i \, (0\leq i\leq \varepsilon)$, and hence it is finite-dimensional. If $\Ga$ has a dual adjacency matrix $A^*$ with respect to $x$ and $\{E_i\}_{i=0}^{\cal D}$, then $T$ is generated by $A$ and $A^*$, see \cite[Lemma 2.6]{projective}. The following result will also be useful.

\begin{lemma}
	\label{lem:products}
	{\rm (See \cite[Lemma 2.7]{projective}.)}
	We have $E^*_i  A E^*_j = 0$  if $|i - j| > 1 \; (0 \le i, j \le \varepsilon)$. 
	Assume that $\Ga$ has a dual adjacency matrix $A^*$ with respect to $x$ and $\{E_i\}_{i=0}^{\cal D}$. Then, $E_i A^* E_j = 0$ if $|i - j| > 1 \;
(0 \le i, j \le {\cal D})$.
\end{lemma}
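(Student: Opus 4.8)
The plan is to prove the two assertions separately, each by the same mechanism: show that a product of the form $E^*_i A E^*_j$ (respectively $E_i A^* E_j$) sends the standard module into two nested subspaces whose intersection is trivial unless $|i-j|\le 1$. For the first assertion, recall from the discussion preceding Definition~\ref{def:wam} that for each $k$ the subspace $E^*_k V$ is spanned by the vectors $\widehat y$ with $\partial(x,y)=k$, and that the triangle inequality gives $A E^*_j V \subseteq E^*_{j-1}V + E^*_j V + E^*_{j+1} V$. First I would left-multiply this containment by $E^*_i$: since $E^*_i E^*_k = \delta_{ik} E^*_i$ by property (eiii), we get $E^*_i A E^*_j V \subseteq E^*_i\bigl(E^*_{j-1}V + E^*_j V + E^*_{j+1} V\bigr)$, and the right-hand side is $\{0\}$ as soon as $i\notin\{j-1,j,j+1\}$, i.e. whenever $|i-j|>1$. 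Hence $E^*_i A E^*_j = 0$ on all of $V$, which forces the matrix itself to be zero.

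For the second assertion we run the dual argument. Assume $\Ga$ has a dual adjacency matrix $A^*$ with respect to $x$ and the ordering $\{E_i\}_{i=0}^{\cal D}$. Then by Definition~\ref{def:q1} we have $A^* E_j V \subseteq E_{j-1} V + E_j V + E_{j+1} V$ for $0\le j\le {\cal D}$. Left-multiplying by $E_i$ and using $E_i E_k = \delta_{ik} E_i$ (the defining property of the primitive idempotents), we obtain $E_i A^* E_j V \subseteq E_i\bigl(E_{j-1} V + E_j V + E_{j+1} V\bigr)$, which is $\{0\}$ whenever $|i-j|>1$. Since this holds on the whole standard module $V$, the matrix $E_i A^* E_j$ vanishes, as claimed.

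This lemma is essentially a formal consequence of the two tridiagonal containments already recorded in the excerpt together with the orthogonality relations among the idempotents, so I do not anticipate any genuine obstacle; the only thing to be careful about is to invoke the correct set of idempotents ($E^*_k$ versus $E_k$) and the correct tridiagonal relation in each half, and to note explicitly that a matrix in $\MX$ annihilating every standard basis vector $\widehat y$ — equivalently, annihilating $V$ — must be the zero matrix.
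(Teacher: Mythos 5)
Your argument is correct: both halves follow by left-multiplying the tridiagonal containments $AE^*_jV \subseteq E^*_{j-1}V + E^*_jV + E^*_{j+1}V$ and $A^*E_jV \subseteq E_{j-1}V + E_jV + E_{j+1}V$ by the appropriate idempotent and using orthogonality, exactly as one would expect. The paper itself offers no proof of this lemma (it simply cites \cite[Lemma 2.7]{projective}), and your argument is the standard one underlying that reference, so there is nothing to compare beyond noting that your write-up supplies the routine details the paper omits.
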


By a \emph{$T$-module} we mean a subspace $W$ of $V$ such that $BW \subseteq W$ for every $B \in T$. Let $W$ denote a $T$-module. Then, $W$ is said to be {\em irreducible} whenever it is nonzero and contains no submodules other than $0$ and $W$.

Let $W$ denote an irreducible $T$-module. Observe that $W$ is a direct sum of the non-vanishing subspaces $E_i^*W$ for $0 \leq i \leq \varepsilon$. The \emph{endpoint} of $W$ is defined as $r:=r(W)=\min \{i \mid 0 \le i\le \varepsilon, \; \e_i W \ne 0 \}$, and the \emph{diameter} of $W$ as $d:=d(W)=\left|\{i \mid 0 \le i\le \varepsilon, \; \e_i W \ne 0 \} \right|-1 $. It turns out that $\e_iW \neq 0$ if and only if $r \leq i \leq r+d$ $(0 \leq i \leq \varepsilon)$, see \cite[Lemma 2.9]{projective}. The module $W$ is said to be \emph{thin} whenever $\dim(E^*_iW)\leq1$ for $0 \leq i \leq \varepsilon$. We say that two $T$-modules $W$ and $W^{\prime}$ are \emph{$T$-isomorphic} (or simply \emph{isomorphic}) whenever there exists a vector space isomorphism $\sigma: W \rightarrow W^{\prime}$ such that $\left( \sigma B - B\sigma \right) W=0$ for all $B \in T$.


\section{The lowering, flat, and raising matrices}
\label{sec:lfr}

Recall our graph $\Ga=(X, {\cal E})$ from Section \ref{sec:ter}. For the rest of this paper, we will assume that $A$ is the (usual) adjacency matrix of $\Ga$: the $(z,y)$-entry of $A$ is equal to 1 if $y,z$ are adjacent, and is equal to 0 otherwise. Fix a vertex $x$ of $\Ga$,  and let $\varepsilon$ denote the eccentricity of $x$. Let $\e_i \; (0 \le i \le \varepsilon)$ be the dual idempotents with respect to $x$, and let $T=T(x,A)$ denote the corresponding Terwilliger algebra. We now recall certain matrices in $T$.

\begin{definition} \label{def2} 
With reference to the notation above, define $L=L(x)$, $F=F(x)$, and $R=R(x)$ in $\MX$ by
	\begin{eqnarray}\label{defLR}
		L=\sum_{i=1}^{\varepsilon}E^*_{i-1}AE^*_i, \hspace{1cm}
		F=\sum_{i=0}^{\varepsilon}E^*_{i}AE^*_i, \hspace{1cm}
		R=\sum_{i=0}^{\varepsilon-1}E^*_{i+1}AE^*_i. \nonumber 
	\end{eqnarray}
	We refer to $L$, $F$, and $R$ as the lowering, the flat, and the raising matrix with respect to $x$, respectively.
\end{definition}
Note that, by definition, $L, F, R \in T$, $F=F^{\top}$, $R=L^{\top}$, and $A=L+F+R$.
Observe that for $y,z \in X$ the $(z,y)$-entry of $L$ equals $1$ if $\partial(z,y)=1$ and $\partial(x,z)= \partial(x,y)-1$ and $0$ otherwise. The $(z,y)$-entry of  $F$ is equal to $1$ if $\partial(z,y)=1$ and $\partial(x,z)= \partial(x,y)$, and is $0$ otherwise. Similarly, the $(z,y)$-entry of $R$ equals $1$ if $\partial(z,y)=1$ and $\partial(x,z)= \partial(x,y)+1$, and is $0$ otherwise. Consequently, for $v \in \e_i V \; (0 \le i \le \varepsilon)$ we have
\begin{equation}
	\label{eq:LRaction}
	L v \in \e_{i-1} V, \qquad  F v \in \e_{i} V, \qquad R v \in \e_{i+1} V.
\end{equation}
Observe also that $\Gamma$ is bipartite if and only if $F=0$. We now recall a connection between matrices $R,F,L$ and certain walks in $\Ga$. The concept of shape of a walk with respect to a fixed vertex was introduced in~\cite{FerMik}. Now, we recall the definition of this concept together with some immediate consequences.
\begin{definition}
	Let $\Gamma = (X , \mathcal{E})$ be a graph. Pick $x, y, z \in  X$, and let
	$P = [y = x_0, x_1, . . . , x_j = z]$ denote a $yz$-walk.  The shape of $P$ with respect to $x$ is a sequence of symbols $t_1 t_2  \ldots t_j$, where for $1 \leq i \leq j$ we have $t_i \in \{\ell, f, r\}$, and such that $t_i = r$ if $\partial(x,x_i) = \partial(x,x_{i-1}) + 1$,  $t_i = f$ if $\partial(x,x_i) = \partial(x,x_{i-1}) $, and $t_i =  \ell$ if $\partial(x,x_i)  = \partial(x,x_{i-1})-1$.
	The number of $yz$-walks of the shape $t_1t_2... t_j $ with respect to $x$ will be denoted as $t_1t_2... t_j(y,z)$, using exponential notation for the shapes containing several consecutive identical symbols.
\end{definition}

The following Lemma is a consequence of elementary matrix multiplication and comments below the Definition \ref{def2} (see also \cite[Lemma 4.2]{FerMik}).

\begin{lemma}
	\label{lem:walks}
	With reference to the notation above, pick $y,z\in X$ and let $m$ be a positive integer. Then, the following hold.
	\begin{enumerate}[label=(\roman*),]
		\item The $(z,y)$-entry of $L^m$ is equal to the number $\ell^m(y,z)$ with respect to $x$.
		\item The $(z,y)$-entry of $L^mR$ is equal to the number $r\ell^m(y,z)$ with respect to $x$.
		\item The $(z,y)$-entry of $RL^m$ is equal to the number $\ell^m r(y,z)$ with respect to $x$.
		\item The $(z,y)$-entry of $LRL$ is equal to the number $\ell r\ell(y,z)$ with respect to $x$.
	\end{enumerate}
\end{lemma}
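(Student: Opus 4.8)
The statement to prove is Lemma~\ref{lem:walks}, which expresses entries of products of lowering and raising matrices as counts of walks of a prescribed shape. Here is how I would approach it.

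\medskip

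\noindent\textbf{Plan.} The proof is a direct computation using the combinatorial description of the entries of $L$ and $R$ given right after Definition~\ref{def2}, together with the rule for matrix multiplication. The key observation is that for any two $\MX$-matrices $B, C$ whose $(z,y)$-entries count $yz$-edges of a prescribed ``type'' (i.e.\ with a prescribed effect on the distance to $x$), the product $BC$ has $(z,y)$-entry $\sum_{w \in X} (C)_{wy}(B)_{zw}$, which counts $yz$-walks of length $2$ passing through an intermediate vertex $w$, where the first step $y \to w$ has the type recorded by $C$ and the second step $w \to z$ has the type recorded by $B$. Iterating this gives the general principle: a product $M_j M_{j-1}\cdots M_1$ of matrices, each of which is one of $L$, $F$, $R$ (equivalently, a ``word'' in these symbols read right-to-left), has $(z,y)$-entry equal to the number of $yz$-walks whose shape with respect to $x$ is the corresponding sequence of symbols $t_1 t_2 \ldots t_j$. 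This is essentially \cite[Lemma 4.2]{FerMik}, as the paper notes.

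\medskip

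\noindent\textbf{Steps.} First I would prove part (i) by induction on $m$. For the base case $m=1$: the comment after Definition~\ref{def2} states that $(L)_{zy}=1$ precisely when $\partial(z,y)=1$ and $\partial(x,z)=\partial(x,y)-1$, i.e.\ precisely when the single-edge walk $[y,z]$ has shape $\ell$ with respect to $x$; so $(L)_{zy}=\ell(y,z)$. For the inductive step, write $L^{m+1}=L^m L$ and compute
$$
(L^{m+1})_{zy}=\sum_{w\in X}(L)_{wy}(L^m)_{zw}.
$$
By the base case and the induction hypothesis, $(L)_{wy}$ is $1$ if $[y,w]$ has shape $\ell$ (and $0$ otherwise), and $(L^m)_{zw}=\ell^m(w,z)$. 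Hence the sum counts pairs consisting of a vertex $w$ with $y\to w$ of shape $\ell$, together with a $wz$-walk of shape $\ell^m$; such pairs are in bijection with $yz$-walks of shape $\ell^{m+1}$, so $(L^{m+1})_{zy}=\ell^{m+1}(y,z)$. Parts (ii), (iii), (iv) then follow by the same mechanism applied once more, using in addition that $(R)_{zy}=r(y,z)$ (again from the comment after Definition~\ref{def2}): for (ii), $L^m R$ reads, right-to-left, ``first an $r$-step, then $m$ successive $\ell$-steps'', giving shape $r\ell^m$; for (iii), $R L^m$ gives shape $\ell^m r$; and for (iv), $L R L$ gives shape $\ell r \ell$. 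In each case one writes the product as a sum over intermediate vertices and checks that the resulting count of concatenated walks equals the number of $yz$-walks of the claimed shape.

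\medskip

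\noindent\textbf{Main obstacle.} Honestly, there is no serious obstacle: the lemma is labelled ``a consequence of elementary matrix multiplication'' and that is exactly right. The only point requiring a little care is the bookkeeping of the \emph{order} in which symbols are read versus the order in which matrices are multiplied --- since matrices act on column vectors by left multiplication, the rightmost factor corresponds to the first edge of the walk, so e.g.\ $L^m R$ corresponds to the shape $r\ell^m$ (an $r$ followed by $m$ copies of $\ell$), not $\ell^m r$. Getting this convention consistent with the statement (and with the convention in \cite{FerMik}) is the one thing to verify; everything else is a routine unwinding of the definition of matrix product and the edge-type descriptions of $L$, $F$, $R$.
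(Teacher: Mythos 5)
Your proposal is correct and matches the paper's approach exactly: the paper simply asserts the lemma as a consequence of elementary matrix multiplication and the entrywise descriptions of $L$ and $R$ given after Definition~\ref{def2} (citing \cite[Lemma 4.2]{FerMik}), and your induction on $m$ plus the careful right-to-left reading of matrix products is precisely the routine verification being invoked. In particular, you correctly handle the one subtle point, namely that the rightmost factor corresponds to the first step of the walk, so $L^mR$ yields shape $r\ell^m$ rather than $\ell^m r$.
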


We finish this section with a definition and a couple of comments.

\begin{definition}\label{def3.1}
With reference to the notation above, define $\mathcal{E}_f = \mathcal{E} \setminus \{yz \mid \partial(x,y) = \partial(x,z)\}$. Observe that the graph $\Ga_f=(X,\mathcal{E}_f)$ is bipartite. The graph $\Ga_f $ is called the full bipartite graph of $\Ga$ with respect to $x$. 
\end{definition}

With reference to Definition \ref{def3.1}, let $\varepsilon=\varepsilon(x)$ denote the eccentricity of $x$ and let $V$ denote the standard module for $\Ga$. Since the vertex set of $\Ga$ is equal to the vertex set of $\Ga_f$, observe that $V$ is also the standard module for $\Ga_f$. Recall that $T$ is generated by the adjacency matrix $A$ and the dual idempotents $\e_i$ ($0\leq i \leq \varepsilon$). Furthermore, we have $A=L+F+R$ where $L,F$, and $R$ are the corresponding lowering, flat, and raising matrices, respectively. Let $A_f$ denote the adjacency matrix of $\Ga_f$ and let $T_f=T_f(x,A_f)$ be the Terwilliger algebra of $\Ga_f$ with respect to $x$. As $\Ga_f$ if bipartite, the flat matrix of $\Ga_f$ with respect to $x$ is equal to the zero matrix. Moreover, the lowering and the raising matrices of $\Ga_f$ with respect to $x$ are equal to $L$ and $R$, respectively. It follows that $A_f=L+R$.  For $0 \le i \le \varepsilon$, note also that the $i$-th dual idempotent of $\Ga_f$ with respect to $x$ is equal to $\e_i$. Consequently, the algebra $T_f$ is generated by the matrices $L, R$, and $\e_i$ ($0\leq i \leq \varepsilon$). Furthermore, the dual adjacency algebra of $\Ga$ with respect to $x$ coincides with the dual adjacency algebra of $\Ga_f$ with respect to $x$.


\section{Distance-regular graphs and Hamming graphs}
\label{sec:ham}

In this section, we review some definitions and basic concepts regarding distance-regular graphs and Hamming graphs.  Let $\Gamma=(X, {\cal E})$ denote a graph with diameter $D$. Recall that for $x,y \in X$ the \emph{distance} between $x$ and $y$, denoted by $\partial(x,y)$, is the length of a shortest $xy$-path. 
For an integer $i$, we define $\Gamma_i(x)$ by 
$$\Gamma_i(x)=\left\lbrace y \in X \mid \partial(x, y)=i\right\rbrace. 
$$
In particular,  $\Gamma(x)=\Gamma_1(x)$ is the set of all neighbors of $x$. For an integer $k \geq 0$, we say that $\Gamma$ is \emph{regular} with valency $k$ whenever $|\Gamma(x)| = k$ for all $x\in X$. 

Fix $x\in X$, and let $\varepsilon$ denote the eccentricity of $x$. Assume that $y\in \Gamma_{i}(x)$ ($0\leq i \leq \varepsilon$). By the triangle inequality we have $\partial(x,z)\in\{i-1,i,i+1\}$. We therefore define the following numbers:
$$
a_i(x,y)= |\Gamma_{i}(x) \cap \Gamma (y)| \quad (0 \le i \le \varepsilon),
$$
$$
b_i(x,y)= |\Gamma_{i+1}(x) \cap \Gamma (y)| \quad (0 \le i \le \varepsilon),
$$
$$
c_i(x,y)= |\Gamma_{i-1}(x) \cap \Gamma (y)| \quad (0 \le i \le \varepsilon).
$$

Note that $b_{\varepsilon}(x,y)=c_0(x,y)=0$. We say that $x$ is \emph{distance-regularized}  (or that $\Gamma$ is \emph{distance-regular around }$x$)  if the numbers $a_i(x,y), b_i(x,y)$, and  $c_i(x,y)$ do not depend on the choice of $y\in \Gamma_{i}(x)$ ($0\leq i \leq \epsilon$). In this case, these numbers are called the \emph{intersection numbers of $x$}, and we abbreviate $a_i(x)=a_i(x,y)$, $b_i(x)=b_i(x,y)$, and $c_i(x)=c_i(x,y)$. 

We say  that $\Gamma$ is \emph{distance-regular} whenever $\Ga$ is distance-regularized around every vertex $z$, and the intersection numbers $a_i(z), b_i(z)$, and $c_i(z)$ do not depend on the choice of $z$. In this case we abbreviate $a_i=a_i(z), b_i=b_i(z)$, and $c_i=c_i(z)$. Note that in a distance-regular graph, the eccentricity of every vertex is equal to $D$. We have $c_i \neq 0 $ for $ 1 \leq i \leq D$ and $b_i \neq 0$ for $0 \leq i \leq D - 1$. Observe that $\Gamma$ is regular with valency $k = b_0$ and $c_i +a_i +b_i = k $ for $ 0 \leq i \leq D$. Moreover, $\Ga$ is bipartite if and only if $a_i=0$ for $0 \le i \le D$. Note that, if $\Ga$ is a distance-regular graph, then its full bipartite graph $\Ga_f=\Ga_f(x)$ is distance-regular around $x$ (but not distance-regular unless $\Ga$ is bipartite).
 
\medskip

We now recall the definition of Hamming graphs. Let $S$ denote a set with
$n\geq2$ elements and let $D$ be a positive integer.
The {\em Hamming graph} $H(D,n)$ has vertex set $S^D$ (that is, the set of the ordered $D$-tuples of elements of $S$, or sequences of length $D$ from $S$). Two vertices are adjacent if and only if they differ in precisely one coordinate; that is, if their Hamming distance is one. The Hamming graph $H(D,n)$ is distance-regular with intersection numbers
$$ 
a_i=i(n-2), \quad b_i=(D-i)(n-1),\quad c_i=i,\qquad (0\leq i\leq D).
$$
Recall that the Hamming graph $H(D, n)$ is bipartite if and only if $n = 2$. For the context of this paper, we will assume that $n \geq 3$. For the rest of this paper, we adopt the following notation.

\begin{notation}
	\label{blank}
	Let $D, n\geq3$ be positive integers and  $H(D,n)$ be the Hamming graph with vertex set $X$. Recall that $H(D,n)$ is distance-regular with intersection numbers $b_i=(D-i)(n-1)$ and $c_i=i$. Fix $x \in X$, and let $\Ga=H(D,n)_f(x)$ denote the full bipartite graph of $H(D,n)$ with respect to $x$.  Let $\e_i \; (0 \le i \le D)$ denote the dual idempotents of $\Ga$ with respect to $x$ and $M^*$ denotes the dual adjacency algebra of $\Ga$ with respect to $x$.  Let $A$ denote the (usual) adjacency matrix of $\Ga$ and $T=T(x,A)$ denote the corresponding Terwilliger algebra. Let $L=L(x)$ and $R=R(x)$ denote the corresponding lowering and raising matrix, respectively. For $0 \le i \le D$ define $\ths_i=D(n-1)-n i$ and let $A^*\in \MX$  be a diagonal matrix defined by
	$$
	(A^*)_{yy}=\theta^*_{\partial(x,y)} \qquad (y \in X).
	$$
\end{notation}

With reference to Notation \ref{blank}, in \cite{FMMM} we showed that $\Ga$ is \emph{uniform} with respect to $x$ in the sense of  \cite{OldTer}. In particular, we have the following Lemma.

\begin{lemma}
	\label{lem:uniform}
	With reference to Notation \ref{blank} we have 
	$$
	  -{1 \over 2} R L^2 + LRL -{1\over 2} L^2 R = (n-1) L.
	$$
\end{lemma}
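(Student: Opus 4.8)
The plan is to prove the identity by a direct combinatorial computation, counting walks of prescribed shapes in $\Ga$ with respect to $x$ and invoking the standard interpretations from Lemma~\ref{lem:walks}. Fix $y,z \in X$ with $\partial(x,y)=i$, and note that both sides of the claimed identity are matrices $E^*_{i-1}(\cdot)E^*_i$ in the sense that $RL^2$, $LRL$, $L^2R$, and $L$ all map $E^*_iV$ into $E^*_{i-1}V$; so it suffices to show, for each such $y$ and each $z$ with $\partial(x,z)=i-1$, that
$$
-\tfrac12\, r\ell^2(y,z) + \ell r\ell(y,z) - \tfrac12\, \ell^2 r(y,z) = (n-1)\,\ell(y,z),
$$
where by Lemma~\ref{lem:walks} the quantities $\ell^2 r(y,z)$, $r\ell^2(y,z)$, $\ell r\ell(y,z)$ are the $(z,y)$-entries of $L^2R$, $RL^2$, $LRL$ respectively, and $\ell(y,z)$ is the $(z,y)$-entry of $L$. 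Recall that $\ell(y,z)\in\{0,1\}$, equal to $1$ exactly when $y,z$ are adjacent in the Hamming graph $H(D,n)$ with $\partial(x,z)=i-1$; and that adjacency in $\Ga$ coincides with Hamming-adjacency that changes the distance to $x$ by one.

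First I would dispose of the case $\ell(y,z)=0$: here I must show the left-hand side vanishes. A walk counted by $\ell r\ell(y,z)$ goes $y \to a \to b \to z$ with $\partial(x,a)=i-1$, $\partial(x,b)=i$, $\partial(x,z)=i-1$, each step a Hamming edge; a walk counted by $r\ell^2(y,z)$ goes $y \to a \to b \to z$ with $\partial(x,a)=i+1$, $\partial(x,b)=i$, $\partial(x,z)=i-1$; and $\ell^2 r(y,z)$ similarly with the raising step last. In each case $y$ and $z$ differ in at most three coordinates. The key point is that $\partial_H(y,z)$ (Hamming distance) is even iff $\partial(x,y)-\partial(x,z)$ is even — here it is odd ($i-(i-1)=1$) — so $\partial_H(y,z)\in\{1,3\}$ when any such walk exists. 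If $\partial_H(y,z)=1$ then $\ell(y,z)=1$ already; so in the $\ell(y,z)=0$ case we have $\partial_H(y,z)=3$, and I would count, for each of the three shapes, the number of length-three Hamming geodesics from $y$ to $z$ with the prescribed distance-to-$x$ profile. Writing $S=\{\text{the three coordinates where }y,z\text{ differ}\}$ and tracking which of these coordinates agrees with $x$ at $y$ versus at $z$ (this is governed by $i = \partial(x,y)$, $i-1=\partial(x,z)$ and $\partial_H(y,z)=3$, forcing exactly two of the three differing coordinates to "move toward" $x$ and one "away", or some such fixed split), I expect the three counts to be equal to a common value $c$, whence $-\tfrac12 c + c - \tfrac12 c = 0$.

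For the main case $\ell(y,z)=1$, so $y$ and $z$ are Hamming-adjacent, differing in exactly one coordinate, say coordinate $j$, with $y_j\neq x_j$ and $z_j = x_j$ (this is what $\partial(x,z)=i-1<i=\partial(x,y)$ forces). Now I would enumerate the three walk types. For $\ell r\ell(y,z)$: the walk is $y\to a\to b\to z$ with $a$ at distance $i-1$, $b$ at distance $i$. Either the first $\ell$-step fixes coordinate $j$ (so $a_j=x_j$) and then we must "come back", or it touches some other coordinate $k$. Carefully splitting into the sub-possibilities — whether the middle $r$-step and final $\ell$-step act on coordinate $j$, on the coordinate used in step one, or on a fresh coordinate — and using that there are $n-1$ non-$x_j$ choices in any "raised" coordinate and $i-1$ or $D-i$ coordinates available of each type, I expect $\ell r\ell(y,z)$, $r\ell^2(y,z)$, $\ell^2 r(y,z)$ to evaluate to explicit polynomials in $n$, $D$, $i$, and then the combination $-\tfrac12 r\ell^2 + \ell r\ell - \tfrac12 \ell^2 r$ should collapse to $n-1$, with the $D$- and $i$-dependence cancelling. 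The main obstacle is precisely the bookkeeping in this case: there are several geometric configurations of the intermediate vertices $a,b$ relative to the special coordinate $j$ and to each other, and I will need to be scrupulous about which coordinate-moves change $\partial(x,\cdot)$ by $+1$ versus $-1$, and about degenerate walks (e.g.\ $a=y$ is impossible since $\partial$ changes, but $b$ could re-coincide with $z$ only if... etc.). An alternative, cleaner route would be to use that $H(D,n)$ is distance-regular and that $\Ga = H(D,n)_f(x)$ is distance-regular around $x$ with known intersection numbers $b_i=(D-i)(n-1)$, $c_i=i$: then $L^2R$, $RL^2$, $LRL$ are, on $E^*_iV$, scalar multiples of $L$ determined by products of $b$'s and $c$'s together with the "local" structure, and one only needs a single nontrivial structure constant (the analogue of $c_2$-type counts) — I would check whether the uniform-graph machinery of \cite{OldTer}, or the explicit parametrization already set up around Notation~\ref{blank}, yields these constants directly, reducing the whole proof to substituting $b_{i-1}=(D-i+1)(n-1)$, $c_{i-1}=i-1$, $c_i=i$ into a short identity. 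I would attempt the distance-regular route first and fall back on the direct walk count if the needed structure constant is not immediate.
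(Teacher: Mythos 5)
The paper's own proof of this lemma is a one-line citation: the identity is exactly the statement that $\Ga=H(D,n)_f(x)$ is uniform in the sense of Terwilliger, which was already established in \cite[Theorem 9.5]{FMMM} (see also \cite[Theorem 3.2]{OldTer}). Your fallback route (``check whether the uniform-graph machinery of \cite{OldTer} \dots yields these constants directly'') is therefore the paper's actual argument. Your primary route --- a direct walk count --- is legitimately different and could be made to work, but as written it contains a genuine error. You claim that the Hamming distance $\partial_H(y,z)$ has the same parity as $\partial(x,y)-\partial(x,z)$, and conclude that only $\partial_H(y,z)\in\{1,3\}$ can occur. This is false for $n\ge 3$: a step of $\Ga$ can lower a coordinate and a later step can raise the \emph{same} coordinate to a \emph{different} non-$x$ value, producing a net Hamming change in that coordinate without a net change in $\partial(x,\cdot)$. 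Concretely, in $H(2,3)$ with $x=(0,0)$, $y=(1,1)$, $z=(2,0)$ one has $\partial(x,y)-\partial(x,z)=1$ but $\partial_H(y,z)=2$, and $[\,(1,1),(0,1),(2,1),(2,0)\,]$ is a $\Ga$-walk of shape $\ell r\ell$. In this omitted case the three counts are \emph{not} a common value: one finds $\ell r\ell(y,z)=1$, $\ell^2r(y,z)=2$, $r\ell^2(y,z)=0$, so the cancellation $-\tfrac12\cdot 0+1-\tfrac12\cdot 2=0$ happens for a different reason than the one you anticipate. Your case analysis, built on the false parity claim, simply never sees these pairs $(y,z)$.

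Two further points. First, the decisive computations are not actually carried out: both the $\partial_H(y,z)=1$ case (you ``expect'' the combination to collapse to $n-1$; it does, with $\ell r\ell=(D-i+1)(n-1)+(i-1)$, $\ell^2r=2(i-1)$, $r\ell^2=2(D-i)(n-1)$) and the $\partial_H(y,z)=3$ case (where the three counts are indeed all equal, to $2$) are left as expectations, so even apart from the missing case the proposal is a plan rather than a proof. Second, the ``cleaner'' distance-regular route as you describe it does not work: $RL^2$, $LRL$, $L^2R$ are individually \emph{not} scalar multiples of $L$ on $E^*_iV$ (they have nonzero $(z,y)$-entries for $z$ at Hamming distance $2$ and $3$ from $y$, where $L$ vanishes); only the specific combination $-\tfrac12 RL^2+LRL-\tfrac12 L^2R$ is, and establishing that is precisely the content of the uniform property being cited.
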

\begin{proof}
The proof follows immediately from \cite[Theorem 9.5]{FMMM} (or \cite[Theorem 3.2]{OldTer}) and the comments after Definition \ref{def3.1}.
\end{proof}

Moreover, since $\Ga$ is uniform with respect to $x$, the following holds.
\begin{lemma}
	\label{lem:modules}
	{\rm (See \cite[Theorem 2.5 and Theorem 3.3]{OldTer}.)}
	With reference to Notation \ref{blank}, let $W$ denote an irreducible $T$-module with endpoint $r$ and diameter $d$. Then, the following (i)--(iii) hold.
	\begin{itemize}
		\item[(i)] $W$ is thin.
		\item[(ii)] Let $W'$ denote an irreducible $T$-module with endpoint $r'$ and diameter $d'$. Then, $W$ and $W'$ are isomorphic if and only if $r=r'$ and $d=d'$.
		\item[(iii)]  $W$ has a basis $\{w_i\}_{i=r}^{r+d}$ such that the following hold:
		\begin{itemize}
			\item[a)] $w_i \in \e_i W \; (r \le i \le r+d)$; 
			\item[b)] $Lw_r=0$ and $ L w_{r+i} = w_{r+i-1} \; (1\le i \le d)$;
			\item[c)] $Rw_{r+i} = x_{i+1} w_{r+i+1}$, where $x_{i+1}=x_{i+1}(d) =(i+1)(n-1)(d-i) \; (0 \le i \le d-1)$, and $R w_{r+d}=0$.
		\end{itemize}
	\end{itemize}
\end{lemma}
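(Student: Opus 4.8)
The goal is to establish Lemma~\ref{lem:modules}, which records three structural facts about the irreducible $T$-modules of the full bipartite graph $\Ga = H(D,n)_f(x)$: every such module is thin, isomorphism of modules is controlled entirely by the pair (endpoint, diameter), and each module carries a ``standard'' basis $\{w_i\}$ on which $L$ and $R$ act in the displayed explicit way. Since the statement is attributed to \cite[Theorem 2.5 and Theorem 3.3]{OldTer}, the real content is to verify that the hypotheses of those theorems are met, namely that $\Ga$ is \emph{uniform} with respect to $x$. That uniformity is exactly what Lemma~\ref{lem:uniform} provides, via the identity $-\tfrac12 RL^2 + LRL - \tfrac12 L^2R = (n-1)L$; so the plan is to invoke Lemma~\ref{lem:uniform} and then run Terwilliger's uniform-graph machinery, filling in how the abstract conclusions specialize here.

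First I would recall the relevant set-up from Section~\ref{sec:lfr}: $\Ga$ is bipartite, so $A = L + R$ (the flat matrix vanishes), the dual idempotents $\e_i$ of $\Ga$ agree with those of $H(D,n)$, and $T$ is generated by $L$, $R$, and the $\e_i$. Then I would invoke Lemma~\ref{lem:uniform}: the identity $-\tfrac12 RL^2 + LRL - \tfrac12 L^2R = (n-1)L$ is precisely the normalized form of the defining relation of a graph that is uniform with respect to $x$ (with the uniform parameter here essentially $n-1$), so by \cite[Theorem~3.2]{OldTer} the pair $(\Ga,x)$ is uniform. Parts (i) and (ii) are then immediate: \cite[Theorem~2.5]{OldTer} asserts that in a uniform graph every irreducible $T$-module is thin, and \cite[Theorem~3.3]{OldTer} asserts that a thin irreducible $T$-module of a uniform graph is, up to isomorphism, determined by its endpoint and diameter. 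For part (iii), I would take a nonzero vector $w_{r+d} \in \e_{r+d} W$ (the top subconstituent, which is one-dimensional by thinness) and set $w_{r+i-1} := L w_{r+i}$ descending from $i = d$ down to $i = 1$; standard lowering/raising arguments for thin modules show each $w_{r+i}$ is a nonzero vector in $\e_{r+i}W$, that $L w_r = 0$ (since $\e_{r-1}W = 0$) and $R w_{r+d} = 0$ (since $\e_{r+d+1}W = 0$), giving (iii)a) and (iii)b).

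The one genuinely computational point is the scalar $x_{i+1} = (i+1)(n-1)(d-i)$ appearing in (iii)c), i.e.\ the fact that $R L^{d-i} w_{r+d} = x_{i+1} L^{d-i-1} w_{r+d}$. I would derive the recursion for these scalars directly from the uniform identity of Lemma~\ref{lem:uniform}. Writing $\xi_j$ for the scalar with $R w_{r+j} = \xi_{j+1} w_{r+j+1}$ (so $\xi_{j+1}$ is what the lemma calls $x_{j+1}$), one applies the operator identity $-\tfrac12 RL^2 + LRL - \tfrac12 L^2 R = (n-1)L$ to the basis vector $w_{r+j+1}$ and reads off, using $L w_{r+j+1} = w_{r+j}$, $L^2 w_{r+j+1} = w_{r+j-1}$, $R w_{r+j} = \xi_{j+1} w_{r+j+1}$, $R w_{r+j-1} = \xi_j w_{r+j}$, and $R w_{r+j+1} = \xi_{j+2} w_{r+j+2}$ followed by two applications of $L$, a scalar recursion of the form $-\tfrac12 \xi_{j+2} + \xi_{j+1} - \tfrac12 \xi_j = n-1$. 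This is a linear second-order recursion in $j$; together with the boundary conditions $\xi_0 = 0$ and $\xi_{d+1} = 0$ (from $L w_r = 0$ and $R w_{r+d} = 0$) it has the unique solution $\xi_{i+1} = (i+1)(n-1)(d-i)$, which one verifies by substitution.

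I expect the main obstacle to be purely expository: making sure the precise normalization conventions of \cite{OldTer} for ``uniform'' line up with the identity as stated in Lemma~\ref{lem:uniform} (the factor $n-1$ on the right, and the symmetric $-\tfrac12,\,1,\,-\tfrac12$ coefficients), and confirming that the parameters in \cite[Theorem~3.3]{OldTer} specialize to the closed form $(i+1)(n-1)(d-i)$ rather than merely to an unspecified product. Once the dictionary is fixed, parts (i)--(iii) follow formally from the cited theorems and the short recursion above, so no lengthy computation is needed beyond checking the boundary data and solving the three-term recursion.
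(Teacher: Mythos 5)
Your proposal is correct and follows essentially the same route as the paper: the paper offers no proof beyond noting that $\Ga$ is uniform with respect to $x$ (Lemma~\ref{lem:uniform}, via \cite{FMMM}) and then citing \cite[Theorems 2.5 and 3.3]{OldTer}, which is exactly your plan. Your extra derivation of the scalars $x_{i+1}=(i+1)(n-1)(d-i)$ from the three-term recursion $-\tfrac12\xi_j+\xi_{j+1}-\tfrac12\xi_{j+2}=n-1$ with boundary values $\xi_0=\xi_{d+1}=0$ is a correct supplement consistent with the cited result.
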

 
With reference to Notation \ref{blank}, the algebra $T$ is semi-simple since the matrix $A$ is symmetric. Consequently, the standard module $V$ is a direct sum of irreducible $T$-modules. Let $W$ be an irreducible $T$-module. By the \emph{multiplicity of} $W$ we mean the number of irreducible $T$-modules in this direct sum, that are isomorphic to $W$. Assume that $W$ has endpoint $r$ and diameter $d$. Since the isomorphism class of $W$ is determined by $r$ and $d$, we will denote the multiplicity of $W$ by ${\rm mult}(r,d)$.

\begin{lemma}
	\label{lem:modules1}
	{\rm (See \cite[Theorem 3.3]{OldTer}.)}
	With reference to Notation \ref{blank}, the following (i), (ii) are equivalent.
	\begin{itemize}
		\item[(i)] There exists an irreducible $T$-module $W$ with the endpoint $r$ and diameter $d$.
		\item[(ii)] $0 \le r \le r+d \le D \le 2r+d$.
	\end{itemize}
    Furthermore, let $0 \le r \le r+d \le D \le 2r+d$. Then, 
    \begin{equation}
    	\label{eq:mult}
    \begin{split}
    	{\rm mult}(r,d) &= {(D+1)_r (n-2)^{2r+d-D}(d+1) \over (D-r-d)! (2r+d-D)! (D+1)} \\
    	&= \frac{d+1}{D-r+1}\binom{D}{2D-2r-d}\binom{2D-2r-d}{D-r-d}(n-2)^{2r+d-D},
    \end{split}
        \end{equation}
    where for a nonnegative integer $a$ we define $(a)_r=1$ if $r=0$ and $(a)_r=a(a-1) \cdots (a-r+1)$ if $r > 0$.
\end{lemma}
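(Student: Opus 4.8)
\textbf{Proof plan for Lemma~\ref{lem:modules1}.}
The strategy is to reduce everything to a count of irreducible $T$-modules by endpoint and diameter, using Lemma~\ref{lem:modules}. First I would establish the equivalence of (i) and (ii). By Lemma~\ref{lem:modules}(iii), an irreducible $T$-module $W$ with endpoint $r$ and diameter $d$ has $\e_i W \neq 0$ precisely for $r \le i \le r+d$, so certainly $0 \le r \le r+d \le D$. The constraint $D \le 2r+d$ should come from the uniform structure: one shows (as in \cite{OldTer}) that the scalars $x_{i+1}(d)$ in Lemma~\ref{lem:modules}(iii)c) together with the global structure of $\Ga$ force the ``$R$-side'' to reach down, which is the standard endpoint/dual-endpoint inequality for modules of a distance-regular-around-$x$ graph; the dual endpoint is $D - r - d$ must be nonnegative in a suitable sense, yielding $2r + d \ge D$. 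Concretely, I would appeal directly to \cite[Theorem 3.3]{OldTer} for this equivalence, since $\Ga$ is uniform by Lemma~\ref{lem:uniform}.

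The substantive part is the multiplicity formula \eqref{eq:mult}. Here I would proceed by a dimension count. Let $m(r,d) = {\rm mult}(r,d)$ and recall that by Lemma~\ref{lem:modules}(i),(ii) every irreducible $T$-module is thin and its isomorphism class is determined by the pair $(r,d)$. Since $V = \bigoplus W$ over irreducibles, decomposing $\e_i V$ gives, for each $0 \le i \le D$,
\[
  \dim \e_i V \;=\; |\Ga_i(x)| \;=\; \sum_{\substack{r,d \,:\, r \le i \le r+d \\ 0 \le r \le r+d \le D \le 2r+d}} m(r,d),
\]
because each module with endpoint $r$ and diameter $d$ contributes exactly $1$ to $\dim \e_i W$ when $r \le i \le r+d$ and $0$ otherwise (thinness). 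Now $|\Ga_i(x)| = \binom{D}{i}(n-1)^i$, the size of the $i$-th subconstituent of $H(D,n)$ (the full bipartite graph does not change distances, only removes edges within a distance class). This gives a triangular linear system for the $m(r,d)$: ordering by, say, $r+d$ and then $r$, the system can be inverted by Möbius inversion / successive differencing. I would verify that the proposed closed form
\[
  m(r,d) \;=\; \frac{(D+1)_r\,(n-2)^{2r+d-D}\,(d+1)}{(D-r-d)!\,(2r+d-D)!\,(D+1)}
\]
satisfies these equations, i.e. check the combinatorial identity
\[
  \binom{D}{i}(n-1)^i \;=\; \sum_{r \le i \le r+d} \frac{(D+1)_r\,(n-2)^{2r+d-D}\,(d+1)}{(D-r-d)!\,(2r+d-D)!\,(D+1)}
\]
for each $i$, and that $m(r,d)$ is a nonnegative integer exactly when (ii) holds (nonnegativity of $(n-2)^{2r+d-D}$ forcing $2r+d \ge D$, and the factorials in the denominator forcing $r+d \le D$). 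The equivalence of the two displayed expressions for $m(r,d)$ in \eqref{eq:mult} is a routine rewriting of Pochhammer symbols as binomial coefficients, which I would relegate to a one-line computation.

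The identity above is most cleanly proved by a generating-function argument or by induction on $D$. One natural route: fix $i$ and reparametrize by $d$ and $s := 2r+d-D \ge 0$ (so $r = (D + s - d)/2$), then recognize the resulting sum as a Vandermonde-type convolution after substituting $q$-free binomial identities; alternatively, use the known decomposition of the standard module of the Hamming scheme under its Terwilliger algebra (the classical result that $H(D,n)$ has thin, dual-thin modules with multiplicities given by Krawtchouk-type formulas, cf. \cite{Terpart1,go}), noting that the Terwilliger algebra of $\Ga$ with respect to $x$ is generated by $L, R$ and the $\e_i$, the same generators (minus $F$) as for $H(D,n)$. Since $F$ acts within each $\e_i V$ and does not affect the $\e_i$-grading or the $L,R$ action relevant to endpoints and diameters, the module structure — hence the multiplicities — is inherited from that of $H(D,n)$; citing \cite[Theorem 3.3]{OldTer} closes this step. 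I expect the main obstacle to be precisely the verification of the closed-form multiplicity: either carrying out the triangular inversion explicitly and matching it to the stated formula, or importing the module structure of the Hamming-scheme Terwilliger algebra and tracking carefully that passing to the full bipartite graph leaves the endpoint/diameter statistics unchanged. Once that identity is in hand, nonnegativity and integrality of $m(r,d)$ give the remaining direction of the equivalence (i) $\Leftrightarrow$ (ii), completing the proof.
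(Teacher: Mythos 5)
The paper offers no proof of this lemma at all: it is stated as a direct citation of \cite[Theorem 3.3]{OldTer}, made applicable because $\Ga$ is uniform with respect to $x$ (Lemma~\ref{lem:uniform} and the comments after Definition~\ref{def3.1}). Your fallback position --- ``appeal directly to \cite[Theorem 3.3]{OldTer}'' --- is therefore exactly what the authors do, and for the equivalence of (i) and (ii) your remarks are consistent with that.

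However, the route you actually sketch for the multiplicity formula has a genuine gap. The system
$\dim \e_i V = \binom{D}{i}(n-1)^i = \sum_{r \le i \le r+d} \mathrm{mult}(r,d)$, $0 \le i \le D$,
is a correct consequence of thinness, but it is \emph{underdetermined}: it has only $D+1$ equations, while the number of admissible pairs $(r,d)$ with $0 \le r \le r+d \le D \le 2r+d$ grows quadratically in $D$ (already for $D=2$ there are four pairs and three equations). So there is no ``triangular inversion'' or M\"obius inversion that recovers $\mathrm{mult}(r,d)$ from the subconstituent dimensions alone; verifying that the closed form satisfies these equations would show consistency, not correctness. To pin down the multiplicities one needs finer invariants --- in \cite{OldTer} this is done via the ranks of the maps $R^{j}$ restricted to $\ker L \cap \e_r V$ (equivalently, the dimensions of the ``new'' highest-weight spaces at each endpoint), which is precisely the content of \cite[Theorem 3.3]{OldTer} for a uniform poset. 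Your alternative suggestion of importing the module structure of the Terwilliger algebra of $H(D,n)$ itself is also not immediate: the algebra $T$ of $\Ga$ omits the flat matrix $F$, and $F$ does act nontrivially inside each $\e_i V$, so the irreducible $T$-module decomposition of $V$ for $\Ga$ is \emph{not} the same as for $H(D,n)$ (the endpoint/diameter statistics and multiplicities genuinely differ); this transfer would need a separate argument rather than the parenthetical remark you give. Finally, the reference \cite{go} you invoke does not appear in the paper's bibliography.
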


\begin{proposition}\label{generates}
	With reference to Notation~\ref{blank}, the matrix $A^*$ generates the dual adjacency algebra $M^*$.
\end{proposition}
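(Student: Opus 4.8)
The plan is to invoke the standard fact that an element of a commutative semisimple matrix algebra with a given basis of orthogonal primitive idempotents generates that algebra exactly when it assigns pairwise distinct scalars to those idempotents, and then to check this distinctness by a one-line computation. First I would record the dimension of $M^*$: by Notation~\ref{blank} the dual idempotents of $\Ga$ with respect to $x$ are $\e_0,\e_1,\dots,\e_D$, and since $\Ga$ is connected with $\partial(x,\cdot)$ attaining every value in $\{0,1,\dots,D\}$ (because $H(D,n)$ has diameter $D$ and $\Ga$ shares its vertex set and its distance-$i$ sets from $x$), each $\e_i$ is nonzero; by the discussion in Section~\ref{sec:ter}, $\{\e_i\}_{i=0}^{D}$ is a basis of $M^*$, so $\dim M^* = D+1$.

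Next, since $A^* = \sum_{i=0}^{D}\ths_i\,\e_i \in M^*$, the unital subalgebra $\langle A^*\rangle$ generated by $A^*$ is contained in $M^*$, and it remains to prove the reverse inclusion, for which it suffices to show $\dim\langle A^*\rangle \ge D+1$. The matrix $A^*$ is diagonal, hence diagonalizable over $\RR$, and its set of eigenvalues is exactly $\{\ths_{\partial(x,y)} \mid y \in X\} = \{\ths_i \mid 0 \le i \le D\}$. I would then verify that these $D+1$ scalars are pairwise distinct: for $0 \le i < j \le D$ one has $\ths_i - \ths_j = \bigl(D(n-1)-ni\bigr) - \bigl(D(n-1)-nj\bigr) = n(j-i) \ne 0$, using $n \ge 3$. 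Therefore $A^*$ has $D+1$ distinct eigenvalues, so its minimal polynomial has degree $D+1$, the powers $I, A^*, (A^*)^2,\dots,(A^*)^D$ are linearly independent, and $\dim\langle A^*\rangle \ge D+1$. Combined with $\langle A^*\rangle \subseteq M^*$ and $\dim M^* = D+1$, this forces $\langle A^*\rangle = M^*$.

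Equivalently, and perhaps more transparently for the write-up, once the $\ths_i$ are known to be distinct one can display each dual idempotent by Lagrange interpolation,
\[
\e_i = \prod_{\substack{0 \le j \le D \\ j \ne i}} \frac{A^* - \ths_j I}{\ths_i - \ths_j} \qquad (0 \le i \le D),
\]
which exhibits every $\e_i$, and hence every element of $M^*$, as a polynomial in $A^*$. Either route is completely routine; there is no real obstacle. The only non-formal ingredient is the distinctness of the $\ths_i$, which is the elementary computation $\ths_i-\ths_j=n(j-i)$ above and is precisely where the standing hypothesis $n \ge 3$ (rather than $n=2$) is used — consistent with the fact that $H(D,2)$ is already bipartite and is excluded throughout the paper.
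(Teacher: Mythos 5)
Your proof is correct, but it takes a different route from the paper. The paper's own proof is a two-line appeal to the literature: it cites the known fact (from the reference \cite{clebsch}) that $A^*$ is a dual adjacency matrix of the Hamming graph $H(D,n)$ itself, hence generates the dual adjacency algebra of $H(D,n)$ with respect to $x$, and then observes (as recorded at the end of Section~\ref{sec:lfr}) that this algebra coincides with $M^*$. You instead give a self-contained elementary argument: $A^*=\sum_{i=0}^{D}\ths_i\e_i$ with $\ths_i-\ths_j=n(j-i)$ pairwise distinct, each $\e_i\neq 0$ (since the distance-$i$ sets from $x$ are the same in $\Ga$ as in $H(D,n)$, so the eccentricity of $x$ is $D$), and then either the minimal-polynomial dimension count or Lagrange interpolation recovers every $\e_i$ as a polynomial in $A^*$. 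Your version has the advantage of not outsourcing anything and of making explicit exactly which numerical fact is needed; the paper's version is shorter and situates $A^*$ in its natural context as the standard dual adjacency matrix of the Hamming graph. One small quibble: distinctness of the $\ths_i$ only needs $n\neq 0$, so this is not where the hypothesis $n\ge 3$ (as opposed to $n=2$) is genuinely used — that hypothesis is there because $H(D,2)$ is already bipartite, not because the argument for this proposition would fail for $n=2$.
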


\begin{proof}
	It is well-known that $A^*$ is a dual adjacency matrix of the Hamming graph $H(D,n)$, see for example \cite{clebsch}. Therefore, $A^*$ generates the dual adjacency algebra of $H(D,n)$ with respect to $x$. As the dual adjacency algebra of $H(D,n)$ with respect to $x$ coincides with the dual adjacency algebra of $\Ga$ with respect to $x$, the result follows.
\end{proof}


\section{A relationship between the matrices $A$ and $A^*$}
\label{sec:cubic}

With reference to Notation \ref{blank}, in this section, we prove that a so-called {\em tridiagonal relation} \cite[Equation~(132)]{tridiagonal} holds for the pair $(A, A^*)$. In other words, we show that $A, A^*$ satisfy a certain equation, which is cubic in $A$ and linear in $A^*$. Recall from Notation \ref{blank} that we have $\ths_i - \ths_j = n(j-i)$. For $y,z\in X$ we denote by $\gamma_3(y,z)$ the number of walks of length $3$ between $y$ and $z$ in $\Ga$. 

\begin{lemma}
	\label{lem:entries}
	With reference to Notation \ref{blank}, pick $y,z \in X$. Then, the following (i)--(iii) hold.
	\begin{itemize}
		\item[(i)] $( A^3 A^* - A^* A^3)_{zy}=\gamma_3(y,z)(\theta^*_{\partial(x,y)}-\theta^*_{\partial(x,z)})$.
		\item[(ii)] $( A A^* A^2- A^2 A^* A)_{zy}=\displaystyle \sum_{[ y,v,w,z]} ( \theta^*_{\partial(x,w)}-\theta^*_{\partial(x,v)})$, where the sum is over all $3$-walks $[ y,v,w,z]$ between $y$ and $z$.
		\item[(iii)] $( A A^* - A^* A)_{zy}$ is $ ( \theta^*_{\partial(x,y)}-\theta^*_{\partial(x,z)})$ if $\partial(z,y)=1$, and $0$ otherwise. 
	\end{itemize}
\end{lemma}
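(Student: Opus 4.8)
The plan is to compute each entry directly by expanding the matrix products in terms of the basis $\{\widehat{y} \mid y \in X\}$ and using the fact that $A^*$ is diagonal with $(A^*)_{vv} = \theta^*_{\partial(x,v)}$. The unifying observation is that for any two matrices $B, B' \in \MX$ we have $(B A^* B')_{zy} = \sum_{v \in X} B_{zv} \theta^*_{\partial(x,v)} B'_{vy}$, so a product of the form $A^{p} A^* A^{q}$ has $(z,y)$-entry $\sum B_{zv}\theta^*_{\partial(x,v)}B'_{vy}$ where the sum ranges over the intermediate vertex $v$ at which the $A^*$ sits, weighted by the number of walks on each side. Since $(A^k)_{zv}$ counts walks of length $k$ from $v$ to $z$, each term in these expansions is naturally indexed by a walk through the appropriate intermediate vertices.

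For part (iii), I would write $(AA^* - A^*A)_{zy} = \sum_v A_{zv}(A^*)_{vv}\delta_{vy} - (A^*)_{zz}\delta_{zv}A_{vy} \cdot(\ldots)$; more carefully, $(AA^*)_{zy} = A_{zy}\theta^*_{\partial(x,y)}$ and $(A^*A)_{zy} = \theta^*_{\partial(x,z)}A_{zy}$, so the difference is $A_{zy}(\theta^*_{\partial(x,y)} - \theta^*_{\partial(x,z)})$, and since $A_{zy} = 1$ exactly when $\partial(z,y) = 1$ and $0$ otherwise, the claim follows immediately. For part (ii), I would expand $(AA^*A^2)_{zy} = \sum_{v,w} A_{zw}A_{wv}(A^*)_{vv}A_{vy}$; reading the indices so that a $3$-walk from $y$ to $z$ is $[y,v,w,z]$ (i.e.\ $y\sim v\sim w\sim z$), this becomes $\sum_{[y,v,w,z]} \theta^*_{\partial(x,v)}$, summing over all such $3$-walks. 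Similarly $(A^2A^*A)_{zy} = \sum_{[y,v,w,z]} \theta^*_{\partial(x,w)}$ — here the diagonal $A^*$ sits at the vertex $w$, which is the second-to-last vertex of the walk. Subtracting term-by-term over the common index set of $3$-walks $[y,v,w,z]$ gives $\sum_{[y,v,w,z]}(\theta^*_{\partial(x,w)} - \theta^*_{\partial(x,v)})$, as claimed. For part (i), the same bookkeeping applies: $(A^3A^*)_{zy} = \sum_{[y,v,w,z]} \theta^*_{\partial(x,y)}$ since $A^*$ acts on the right at vertex $y$, and $(A^*A^3)_{zy} = \sum_{[y,v,w,z]} \theta^*_{\partial(x,z)}$ since $A^*$ acts on the left at $z$; each of these sums has exactly $\gamma_3(y,z)$ terms, so the difference collapses to $\gamma_3(y,z)(\theta^*_{\partial(x,y)} - \theta^*_{\partial(x,z)})$.

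The only real subtlety — and the step I would be most careful about — is fixing a consistent orientation/indexing convention for walks so that the three parts use the same labelling of a $3$-walk $[y,v,w,z]$, and making sure the placement of $A^*$ (leftmost factor $\leftrightarrow$ endpoint $z$, rightmost factor $\leftrightarrow$ endpoint $y$, and the interior positions $\leftrightarrow$ $w$ and $v$ respectively) is matched correctly with the distances appearing in the statement. Once the convention $(A^{k})_{zv} = \#\{k\text{-walks from } v \text{ to } z\}$ and "$[y = x_0, x_1, \ldots, x_j = z]$" from the earlier definition are pinned down, all three identities are immediate consequences of expanding the sums. No deep input is needed beyond elementary matrix multiplication and the diagonality of $A^*$; this lemma is purely a computational preliminary, and the substantive use of it (deriving the cubic tridiagonal relation) will come afterward.
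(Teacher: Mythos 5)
Your approach is exactly the paper's: the published proof is the one-sentence observation that these identities follow from elementary matrix multiplication, the diagonality of $A^*$, and the fact that $(A^i)_{zy}$ counts $i$-walks between $y$ and $z$; you have simply written out the bookkeeping. Parts (i) and (iii) are correct as written.

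In part (ii), however, your intermediate computation swaps the two products. The sum you display, $\sum_{v,w} A_{zw}A_{wv}(A^*)_{vv}A_{vy}$, is the $(z,y)$-entry of $A^2A^*A$ (read the factors left to right: $A$, $A$, $A^*$, $A$), not of $AA^*A^2$. The correct assignment is the opposite of the one you state: in $AA^*A^2$ the diagonal factor sits next to the single $A$ on the left, hence at the vertex adjacent to $z$, namely $w$, giving $(AA^*A^2)_{zy}=\sum_{[y,v,w,z]}\theta^*_{\partial(x,w)}$; in $A^2A^*A$ it sits at the vertex adjacent to $y$, namely $v$, giving $(A^2A^*A)_{zy}=\sum_{[y,v,w,z]}\theta^*_{\partial(x,v)}$. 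Taken literally, your two intermediate formulas would make $(AA^*A^2-A^2A^*A)_{zy}$ equal to $\sum_{[y,v,w,z]}\bigl(\theta^*_{\partial(x,v)}-\theta^*_{\partial(x,w)}\bigr)$, the negative of the claim, so your final line does not follow from the lines preceding it; with the corrected labels it does. This is precisely the orientation slip you yourself flagged as the one real subtlety, so it should be fixed, but it does not affect the validity of the method.
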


\begin{proof}
	The equations hold using elementary matrix multiplication, the definition of the matrix $A^*$, and the fact that the $(v,w)$-entry of $A^i$ is equal to the number of walks of length $i$ between $v$ and $w$.
\end{proof}

\begin{theorem}
	\label{thm:cubic}
With reference to Notation \ref{blank}, the pair $(A, A^*)$ satisfies the following tridiagonal relation:
	\begin{equation}\label{eqtrid}
		A^3 A^* - A^* A^3+3( A A^* A^2 - A^2 A^* A)=4(n-1) ( A A^* - A^* A).  
	\end{equation}
\end{theorem}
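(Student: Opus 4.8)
The plan is to verify the tridiagonal relation entrywise, reducing everything to a statement about counting walks of length $3$ in $\Ga$ and then exploiting the uniformity of $\Ga$ via Lemma~\ref{lem:uniform}. Fix $y,z \in X$. Using Lemma~\ref{lem:entries}(i) and (iii), the $(z,y)$-entry of the left-hand side minus the right-hand side of \eqref{eqtrid} becomes
$$
\gamma_3(y,z)\,(\ths_{\partial(x,y)}-\ths_{\partial(x,z)}) + 3\sum_{[y,v,w,z]}(\ths_{\partial(x,w)}-\ths_{\partial(x,v)}) - 4(n-1)\,(\ths_{\partial(x,y)}-\ths_{\partial(x,z)})\,[\partial(y,z)=1],
$$
where the middle sum runs over all $3$-walks from $y$ to $z$. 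Since $\ths_i-\ths_j = n(j-i)$, each term here is an integer multiple of $n$; dividing out $n$, I must show that the resulting integer vanishes for every pair $y,z$. The key observation is that the quantity $\ths_{\partial(x,y)}-\ths_{\partial(x,z)}$ and each $\ths_{\partial(x,w)}-\ths_{\partial(x,v)}$ depends only on the differences of distances to $x$ along the walk, i.e. only on the \emph{shape} of the walk in the sense of the Definition preceding Lemma~\ref{lem:walks}.

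Concretely, for a $3$-walk $[y,v,w,z]$ of shape $t_1t_2t_3$ with each $t_i \in \{\ell,r\}$ (there are no $f$-steps since $\Ga$ is bipartite), write $s(t)=-1$ if $t=\ell$ and $s(t)=+1$ if $t=r$. Then $\partial(x,z)-\partial(x,y) = s(t_1)+s(t_2)+s(t_3)$ and $\partial(x,w)-\partial(x,v) = s(t_2)$. Grouping the $3$-walks by shape and letting $N_{t_1t_2t_3}(y,z)$ denote the number of $yz$-walks of shape $t_1t_2t_3$, the bracket (after dividing by $n$) equals
$$
-\Bigl(\sum_{t_1t_2t_3} N_{t_1t_2t_3}(y,z)\,(s(t_1)+s(t_2)+s(t_3))\Bigr) - 3\sum_{t_1t_2t_3} N_{t_1t_2t_3}(y,z)\, s(t_2) + 4(n-1)\,(s')\,[\partial(y,z)=1],
$$
with $s'=\partial(x,z)-\partial(x,y)\in\{-1,+1\}$ when $\partial(y,z)=1$. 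The eight shapes split into those ending/starting with $\ell$ or $r$, and by Lemma~\ref{lem:walks} the relevant counts are entries of the matrices $L^3, L^2R, LRL, RL^2$ and their transposes; so this entire expression is the $(z,y)$-entry of a fixed element of $T$ built from $L$ and $R$. Thus the whole identity \eqref{eqtrid} is equivalent to a single matrix identity among $L,R$, which after collecting terms should be exactly a scalar multiple of the relation $-\tfrac12 RL^2 + LRL - \tfrac12 L^2R = (n-1)L$ of Lemma~\ref{lem:uniform}, together with its transpose $-\tfrac12 L R^2 + R L R - \tfrac12 R^2 L = (n-1)R$.

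So the cleanest route is to bypass entries altogether: substitute $A = L+R$ (using $F=0$) and $A^* = \sum_i \ths_i \e_i$ into \eqref{eqtrid}, and expand using \eqref{eq:LRaction} together with the commutation rules $\e_i L = L\e_{i+1}$, $\e_i R = R\e_{i-1}$ and $A^*\e_i = \ths_i\e_i$. Every monomial in $L,R$ of total degree $3$ acts on $\e_i V$ by shifting the subconstituent index, and conjugating $A^*$ past such a monomial multiplies it by $n$ times the net index shift; this turns $A^3A^* - A^*A^3$, $AA^*A^2 - A^2A^*A$, and $AA^* - A^*A$ each into $n$ times an explicit $\ZZ$-linear combination of degree-$3$ (resp. degree-$1$) monomials in $L,R$. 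Collecting, \eqref{eqtrid} reduces to
$$
\bigl(-\tfrac12 RL^2 + LRL - \tfrac12 L^2R - (n-1)L\bigr) + \bigl(\text{transpose}\bigr) = 0,
$$
up to an overall nonzero scalar, which holds by Lemma~\ref{lem:uniform} and its transpose (the transpose of Lemma~\ref{lem:uniform} is valid since $R = L^\top$). I expect the main obstacle to be purely bookkeeping: correctly tracking the coefficients of the $\binom{3}{1}+\binom{3}{1}=$ several degree-$3$ monomials $L^3, L^2R, LRL, RL^2, L R^2, RLR, R^2L, R^3$ when expanding the commutators, and checking that the coefficients of $L^3, R^3, L^2 R$-type (non-``middle-heavy'') terms cancel outright while the surviving terms assemble into the uniformity relation. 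There is no conceptual difficulty once the shape/monomial dictionary is set up; the risk is an arithmetic slip in the factor $3$ and the factor $4(n-1)$, so I would double-check by also evaluating both sides on a low-rank irreducible $T$-module using the explicit basis and action constants $x_{i+1}=(i+1)(n-1)(d-i)$ from Lemma~\ref{lem:modules}(iii).
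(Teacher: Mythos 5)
Your proposal is correct, and the route you single out as ``cleanest'' is genuinely different from the paper's. The paper proves \eqref{eqtrid} entrywise: it fixes $y,z$, runs a case analysis on $\partial(y,z)$ and on $\partial(x,y)-\partial(x,z)$, counts geodesics of length $3$ in $H(D,n)$ to get $\gamma_3(y,z)=6$ in the ``difference $3$'' case, and invokes Lemma~\ref{lem:uniform} entrywise (via the shape counts $a=\ell^2r(y,z)$, $b=\ell r\ell(y,z)$, $c=r\ell^2(y,z)$) only in the ``difference $1$'' case. Your operator-level argument replaces all of this with the single grading identity $A^*M-MA^*=n\,\deg(M)\,M$ for a monomial $M$ in $L,R$ of net lowering degree $\deg(M)$ (a consequence of $\e_iL=L\e_{i+1}$, $\e_iR=R\e_{i-1}$ and $\ths_{i}-\ths_{j}=n(j-i)$). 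Expanding $A=L+R$, one finds that the left side of \eqrefeqtrid{} minus the right side equals $-4n\bigl(-\tfrac12 L^2R+LRL-\tfrac12 RL^2-(n-1)L\bigr)+4n\bigl(-\tfrac12 LR^2+RLR-\tfrac12 R^2L-(n-1)R\bigr)$, and both brackets vanish by Lemma~\ref{lem:uniform} and its transpose. I verified this bookkeeping: the coefficients of $L^3$ and $R^3$ are $-3n+3n=0$, so the geodesic count $c_3c_2=6$ that the paper needs is never used, and the mixed monomials carry coefficients $2n,2n,-4n$ (resp.\ $-2n,-2n,4n$), exactly assembling the uniformity relation. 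What your approach buys is the elimination of the distance case analysis and of the argument that geodesics of $H(D,n)$ survive in $\Ga$; what the paper's buys is that it never needs the transpose of Lemma~\ref{lem:uniform} explicitly (it says ``the proof is analogous'' for $\partial(x,z)>\partial(x,y)$). One small correction: the final reduction is not ``an overall nonzero scalar'' times the sum of the relation and its transpose --- the two brackets appear with opposite signs ($-4n$ and $+4n$) --- but since each vanishes separately this does not affect the argument.

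(A note on notation used above: ``$\eqrefeqtrid$'' should read equation \eqref{eqtrid}.)
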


\begin{proof}
	We will prove that for all $z,y \in X$ the $(z,y)$-entries of both sides of the equation \eqref{eqtrid} agree. It is clear from Lemma \ref{lem:entries} that, if $\partial(z,y) \ge 4$, then the $(z,y)$-entries of both sides of the equation \eqref{eqtrid} equal $0$. Next, if $z=y$ or $\partial(z,y)=2$, then there are no walks of length $3$ between $y$ and $z$ (recall that $\Ga$ is bipartite), and so again, by Lemma \ref{lem:entries}, the $(z,y)$-entries of both sides of the equation \eqref{eqtrid} equal $0$. 
	
	In what follows, we will assume that $\partial(x,z) < \partial(x,y)$; if $\partial(x,z) > \partial(x,y)$, then the proof is analogous. Assume first that $\partial(x,z)=i, \partial(x,y)=i+3$ for some $0 \leq i \leq D-3$, and that $\partial(y,z)=3$. Since $H(D,n)$ is distance-regular and $\Ga$ was obtained from $H(D,n)$ by deleting only the edges that are “flat” with respect to $x$, it follows that all geodesics between $y$ and $z$ in $H(D,n)$ remain geodesics between $y$ and $z$ in $\Ga$. It follows that there are exactly $c_3 c_2 = 6$ walks of length $3$ between $y$ and $z$ in $\Ga$, that is, $\gamma_3(y,z)=6$. Thus, Lemma \ref{lem:entries} implies that 
	$$
	  ( A^3 A^* - A^* A^3)_{zy} = 6(\ths_{i+3}-\ths_i) = -18n,
	$$
	$$
	  ( A A^* A^2- A^2 A^* A)_{zy} = 6(\ths_{i+1}-\ths_{i+2}) = 6n,
	$$
	and $(A A^* - A^* A)_{zy}=0$. We again find that the $(z,y)$-entries of both sides of the equation \eqref{eqtrid} agree. 
	
	It remains to consider the case when $\partial(x,z)=i, \partial(x,y)=i+1$ for some $0 \leq i \leq D-1$, and $\partial(z,y) \in \{1,3\}$. Note that in this case we can have exactly three possible shapes of walks of length $3$ between $y$ and $z$ in $\Ga$, namely walks of shapes $\ell^2 r$, $\ell r \ell$ and $r \ell^2$. Let us abbreviate $a=\ell^2r(y,z)$, $b=\ell r \ell(y,z)$ and $c=r \ell^2(y,z)$. Recall also that by Lemma \ref{lem:walks} we have that $a = (RL^2)_{zy}$, $b=(L R L)_{zy}$ and $c=(L^2 R)_{zy}$, and that by Lemma \ref{lem:uniform} we have 
	$$ 
		-\frac{1}{2}RL^2+LRL -\frac{1}{2} L^2R=(n-1)L.
    $$
    The above comments now yield 
	\begin{eqnarray}\label{entrywise}
		-\frac{1}{2}a+b-\frac{1}{2}c= \left\{ \begin{array}{lll}
			n-1 & \hbox{ if } \; \partial(z,y)=1, \\
			0  &  \hbox{ otherwise. } \end{array} \right. 
	\end{eqnarray}
On the other hand, using Lemma \ref{lem:walks} we find that the $(z,y)$-entry of the left-hand side of \eqref{eqtrid} is equal to 
	\begin{equation}
	\begin{split}
		-n(a+b+c)+3(a(\theta^*_{i-1}-\theta^*_{i})+b(\theta^*_{i+1}-\theta^*_{i})+c(\theta^*_{i+1}-\theta^*_{i+2})) & = \\
		-n(a+b+c) +3(na -nb +nc) = n(2a-4b+2c). &
	\end{split}
	\end{equation}
	Using \eqref{entrywise} we obtain 
	\begin{eqnarray*}
		n(2a-4b+2c)= \left\{ \begin{array}{lll}
			-4n(n-1)  &\hbox{ if } \; \partial(z,y)=1, \\
			0    &\hbox{ otherwise.} \end{array} \right.
	\end{eqnarray*}
	This shows that the $(z,y)$-entries of both sides of the equation \eqref{eqtrid} agree also in this case, thus completing the proof.
\end{proof}


\section{The eigenvalues of $A$}
\label{sec:eig}

With reference to Notation~\ref{blank}, in this section, we compute the eigenvalues of $\Ga$. In order to do that, we first look at the action of the adjacency matrix $A$ on the irreducible $T$-modules.

Recall that by Lemma \ref{lem:modules1}, an irreducible $T$-module with endpoint $r$ and diameter $d$ exists if and only if $0 \le r \le r+d \le D \le 2r+d$. Assume that  $0 \le r \le r+d \le D \le 2r+d$ and let $W$ denote an irreducible $T$-module with endpoint $r$ and diameter $d$. Recall the basis $\{w_i\}_{i=r}^{r+d}$ of $W$ from Lemma \ref{lem:modules}(iii). Let $A_{r,d}$ denote the matrix representing the action of $A$ on $W$ with respect to this basis. Then, by Lemma \ref{lem:modules}(iii) and since $A=R+L$,  we have 

\begin{equation}\label{rep}
A_{r,d}=\begin{pmatrix}
	0 &1                             \\
	x_{1} &0 &1 &  & \text{\huge0}\\
&\ddots  & \ddots & \ddots            \\
	 \text{\huge0} &  &x_{d-1}  & 0  &1    \\
	&  &   &x_{d}   & 0
\end{pmatrix}.
\end{equation}
Let $f_i$ be the characteristic polynomial of the $i$th principal submatrix of $A_{r,d}$  for $1\leq i \leq d+1$. Therefore, $f_{d+1}$ is the characteristic polynomial of $A_{r,d}$. It is straightforward  to check that the polynomials $f_i$ satisfy the recurrence formula
\begin{align*}
	f_{i+1}(t)&=tf_i(t)-x_{i}f_{i-1}(t)=tf_i(t)-i(n-1)(d-i+1)f_{i-1}(t),
\end{align*}
with $f_0(t)=1$, $f_1(t)=t$. The following theorem provides an explicit expression for the polynomial $f_{d+1}$.

\begin{theorem}
	\label{Krat}
	With reference to Notation \ref{blank}, assume that $0 \le r \le r+d \le D \le 2r+d$ and consider the matrix $A_{r,d}$ as defined in \eqref{rep}. Then, the polynomials $f_i \; (0 \le i \le d+1)$, defined above, satisfy
	$$
	  f_i(t)=(2\sqrt{n-1})^iP_i\left(\frac{t}{2\sqrt{n-1}}+\frac{d}{2}\right),
	$$
	where $P_{i}(t)$ is the normalized Krawtchouk polynomial of degree $i$ (see \cite[Equation~(9.11.4)]{koekoekbook} with $p=\frac{1}{2}$ for a definition of these polynomials).
	In particular, the characteristic polynomial of $A_{r,d}$ is equal to
	$$
	  f_{d+1}(t)=(2\sqrt{n-1})^{d+1}P_{d+1}\left(\frac{t}{2\sqrt{n-1}}+\frac{d}{2}\right).
	$$
	
\end{theorem}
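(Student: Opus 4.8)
The plan is to prove the identity by induction on $i$, matching the three-term recurrence for the $f_i$ against the known three-term recurrence for the normalized Krawtchouk polynomials. First I would write down, from \cite[Equation~(9.11.4)]{koekoekbook} with $p = \tfrac12$, the explicit recurrence satisfied by the degree-$i$ normalized Krawtchouk polynomials $P_i(u)$ associated to the parameters $N = d$, $p = \tfrac12$. Denoting the recurrence in the form $u\,P_i(u) = P_{i+1}(u) + \beta_i P_i(u) + \gamma_i P_{i-1}(u)$ (with the normalization chosen so that $P_i$ is monic, which is the convention that makes the recurrence coefficient of $P_{i+1}$ equal to $1$), one has $\beta_i = \tfrac{d}{2}$ and $\gamma_i = \tfrac{i(d-i+1)}{4}$ for the symmetric case $p=\tfrac12$; these are exactly the entries that will appear after rescaling. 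The substitution $u = \tfrac{t}{2\sqrt{n-1}} + \tfrac{d}{2}$ is designed precisely to absorb the constant $\beta_i = d/2$, turning the recurrence for $P_i$ into a recurrence with zero diagonal, which matches the tridiagonal shape of $A_{r,d}$ in \eqref{rep}.

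Concretely, set $g_i(t) := (2\sqrt{n-1})^i P_i\!\left(\tfrac{t}{2\sqrt{n-1}} + \tfrac{d}{2}\right)$; I would show $g_i = f_i$. The base cases $g_0 = 1 = f_0$ and $g_1(t) = 2\sqrt{n-1}\cdot P_1\!\left(\tfrac{t}{2\sqrt{n-1}} + \tfrac{d}{2}\right) = t = f_1(t)$ follow from $P_0 = 1$ and $P_1(u) = u - \tfrac{d}{2}$ (the monic linear Krawtchouk polynomial). For the inductive step, plug $u = \tfrac{t}{2\sqrt{n-1}} + \tfrac{d}{2}$ into the Krawtchouk recurrence. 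The left side $u\,P_i(u)$ becomes $\left(\tfrac{t}{2\sqrt{n-1}} + \tfrac{d}{2}\right) P_i(u)$; multiplying through by $(2\sqrt{n-1})^{i+1}$ and using $g_j = (2\sqrt{n-1})^j P_j(u)$, the $\tfrac{d}{2}$-term on the left cancels the $\beta_i P_i(u) = \tfrac{d}{2}P_i(u)$ term on the right, leaving
\begin{equation*}
t\,g_i(t) = g_{i+1}(t) + (2\sqrt{n-1})^2 \gamma_i\, g_{i-1}(t).
\end{equation*}
Since $(2\sqrt{n-1})^2 \gamma_i = 4(n-1)\cdot \tfrac{i(d-i+1)}{4} = i(n-1)(d-i+1) = x_i$, this is exactly $g_{i+1}(t) = t\,g_i(t) - x_i\, g_{i-1}(t)$, the same recurrence as for $f_{i+1}$. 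By induction $g_i = f_i$ for all $0 \le i \le d+1$, and in particular the characteristic polynomial $f_{d+1}$ has the claimed form.

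The main obstacle I anticipate is purely bookkeeping about normalization conventions: the Askey-scheme reference \cite{koekoekbook} typically states the Krawtchouk polynomials $K_n(x;p,N)$ in a hypergeometric normalization where the leading coefficient is not $1$, so I must be careful to pass to the monic version $P_i$ (that is what "normalized" is stipulated to mean here) and verify that in the $p=\tfrac12$ case the recurrence coefficients are genuinely $b_i = N/2 = d/2$ on the diagonal and $\lambda_i = i(N-i+1)/4$ below, with no residual $p$-dependent asymmetry. Once the correct monic recurrence coefficients are pinned down, the argument is a direct two-line induction; I would double-check the $p=\tfrac12$ specialization by expanding $P_1$ and $P_2$ by hand against the recurrence to make sure the constants $d/2$ and $i(d-i+1)/4$ are right before invoking the general step.
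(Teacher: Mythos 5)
Your proposal is correct and follows essentially the same route as the paper: both define $g_i(t)=(2\sqrt{n-1})^iP_i\bigl(\tfrac{t}{2\sqrt{n-1}}+\tfrac{d}{2}\bigr)$, substitute into the $p=\tfrac12$ Krawtchouk recurrence, observe that the $\tfrac{d}{2}$-terms cancel to yield $tg_i=g_{i+1}+i(n-1)(d-i+1)g_{i-1}$, and conclude $f_i=g_i$ by induction from the base cases $g_0=1$, $g_1=t$. The normalization bookkeeping you flag is exactly the convention the paper adopts, so no gap remains.
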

\begin{proof}
	By \cite[Equation~(9.11.4)]{koekoekbook},  the normalized
	Krawtchouk polynomials (for  $p=\frac{1}{2}$) satisfy the recurrence formula
	\begin{align}\label{eq2}
		tP_i(t)=P_{i+1}(t)+\frac{d}{2}P_i(t)+\frac{i}{4}(d-i+1)P_{i-1}(t),
	\end{align}
with $P_0(t)=1$ and $P_1(t)=t-d/2$. Define the polynomials $g_i(t)$ as follows
$$
  g_i(t)=(2\sqrt{n-1})^iP_i\left(\frac{t}{2\sqrt{n-1}}+\frac{d}{2}\right).
$$
We will now show that $f_i(t)$ = $g_i(t)$ for $0 \le i \le d+1$. To do that, we first replace $t$ by $\frac{t}{2\sqrt{n-1}}+\frac{d}{2}$ in \eqref{eq2} and multiply the obtained equality by $(2\sqrt{n-1})^{i+1}$ on both sides. Precisely, we get
{\small	\begin{align*}
		\left(\frac{t}{2\sqrt{n-1}}+\frac{d}{2}\right)P_i\left(\frac{t}{2\sqrt{n-1}}+\frac{d}{2}\right)(2\sqrt{n-1})^{i+1}=&P_{i+1}\left(\frac{t}{2\sqrt{n-1}}+\frac{d}{2}\right)(2\sqrt{n-1})^{i+1}\\+&\frac{d}{2}P_i\left(\frac{t}{2\sqrt{n-1}}+\frac{d}{2}\right)(2\sqrt{n-1})^{i+1}\\+&\frac{i}{4}(d-i+1)P_{i-1}\left(\frac{t}{2\sqrt{n-1}}+\frac{d}{2}\right)(2\sqrt{n-1})^{i+1}.
	\end{align*}}
In other words,
$$
  (t+d\sqrt{n-1})g_i(t)=g_{i+1}(t)+d\sqrt{n-1}g_i(t)+i(n-1)(d-i+1)g_{i-1}(t),
$$
or equivalently,
$$
	  tg_i(t)=g_{i+1}(t)+i(n-1)(d-i+1)g_{i-1}(t).
$$
Therefore, the polynomials $f_i$ and $g_i$ satisfy the same recurrence formula. Furthermore, note that 
$$
	g_0(t)=1\qquad\mbox{and} \qquad g_1(t)=(2\sqrt{n-1})P_1\left(\frac{t}{2\sqrt{n-1}}+\frac{d}{2}\right)=2\sqrt{n-1}\frac{t}{2\sqrt{n-1}}=t,
$$
showing that  $f_0(t)=g_0(t)=1$ and $f_1(t)=g_1(t)=t$. This implies that $f_i(t)=g_i(t)$ for all $i$. In particular,
$$
	f_{d+1}(t)=g_{d+1}(t)=(2\sqrt{n-1})^{d+1}P_{d+1}\left(\frac{t}{2\sqrt{n-1}}+\frac{d}{2}\right).
$$
\end{proof}

Consider a linear map on a finite-dimensional vector space. This map is called \emph{multiplicity-free} whenever the map is diagonalizable, and each eigenspace has dimension one.

\begin{corollary}
	\label{eigen}
	With reference to Notation \ref{blank}, assume that $0 \le r \le r+d \le D \le 2r+d$, and consider the matrix $A_{r,d}$ defined in \eqref{rep}. Then, $A_{r,d}$ is multiplicity-free with eigenvalues 
	$$
	  \{\sqrt{n-1}(d-2j)\,\,;\,\,0\leq j\leq d\}.
	$$
\end{corollary}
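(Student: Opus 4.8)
The plan is to deduce the spectrum of $A_{r,d}$ directly from the explicit formula for its characteristic polynomial $f_{d+1}$ obtained in Theorem~\ref{Krat}. Since $f_{d+1}(t)=(2\sqrt{n-1})^{d+1}P_{d+1}\!\left(\tfrac{t}{2\sqrt{n-1}}+\tfrac{d}{2}\right)$, the eigenvalues of $A_{r,d}$ are exactly the values $t$ for which $P_{d+1}\!\left(\tfrac{t}{2\sqrt{n-1}}+\tfrac{d}{2}\right)=0$. Thus the first step is to recall that the normalized Krawtchouk polynomial $P_{d+1}$ (with parameters $N=d$, $p=\tfrac12$, in the notation of \cite[Eq.~(9.11.4)]{koekoekbook}) has exactly the $d+1$ simple real zeros located at the points $0,1,2,\dots,d$ of its support. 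Equivalently, after the affine substitution, one checks that $P_{d+1}\!\left(\tfrac{t}{2\sqrt{n-1}}+\tfrac{d}{2}\right)=0$ precisely when $\tfrac{t}{2\sqrt{n-1}}+\tfrac{d}{2}\in\{0,1,\dots,d\}$, i.e.\ when $t=2\sqrt{n-1}\left(j-\tfrac{d}{2}\right)=\sqrt{n-1}(2j-d)$ for $0\le j\le d$. Replacing $j$ by $d-j$ gives the set in the statement, $\{\sqrt{n-1}(d-2j)\mid 0\le j\le d\}$.

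The second step is to argue multiplicity-freeness. There are two clean ways. The representation-theoretic route: $A_{r,d}$ is the matrix of $A$ (a real symmetric matrix) acting on the thin irreducible $T$-module $W$ of dimension $d+1$, and with respect to the basis $\{w_i\}$ it has the irreducible tridiagonal (Jacobi) form \eqref{rep} with all subdiagonal entries $x_i=i(n-1)(d-i+1)$ nonzero for $1\le i\le d$; an irreducible Jacobi matrix over $\RR$ is always diagonalizable with simple spectrum, hence multiplicity-free. The polynomial route: the $d+1$ values $\sqrt{n-1}(d-2j)$, $0\le j\le d$, are pairwise distinct (the map $j\mapsto d-2j$ is injective on $\{0,\dots,d\}$ and $\sqrt{n-1}>0$ since $n\ge 3$), so $f_{d+1}$, a monic polynomial of degree $d+1$, has $d+1$ distinct roots and therefore factors with simple roots; since $A_{r,d}$ is similar to a symmetric matrix it is diagonalizable, and distinct roots of the characteristic polynomial force each eigenspace to be one-dimensional. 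Either way the conclusion follows.

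I would write it via the polynomial route since it uses only Theorem~\ref{Krat} plus the classical fact about the zeros of Krawtchouk polynomials, keeping the corollary self-contained. The one point that deserves a sentence of justification is the location of the zeros of $P_{d+1}$: for the (symmetric, $p=\tfrac12$) Krawtchouk family of degree $d+1$ with $N=d$, the polynomial is orthogonal with respect to the binomial weight supported on $\{0,1,\dots,d\}$, which has exactly $d+1$ atoms, so $P_{d+1}$ — being orthogonal to all lower-degree polynomials against a measure with $d+1$ support points — must vanish at all of them; alternatively this is immediate from the three-term recurrence \eqref{eq2} together with the boundary behaviour, or can simply be cited from \cite{koekoekbook}.

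The main (and only real) obstacle is making the claim about the zeros of $P_{d+1}$ rigorous and correctly matched to the normalization used in \cite{koekoekbook}: one must be careful that the parameter playing the role of $N$ in the Krawtchouk family here is $d$ (not $d+1$ or $D$), so that the degree $d+1$ polynomial is exactly the one whose zero set is the full support $\{0,1,\dots,d\}$. Once that bookkeeping is pinned down, the rest is the short computation of the affine change of variable and the trivial observation that the resulting $d+1$ eigenvalues are distinct.
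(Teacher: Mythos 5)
Your proposal is correct and follows essentially the same route as the paper: both read off the spectrum from the characteristic polynomial $f_{d+1}$ of Theorem~\ref{Krat}, use the fact that the degree-$(d+1)$ normalized Krawtchouk polynomial vanishes exactly at the $d+1$ support points $0,1,\dots,d$ (the paper states this as the factorization $P_{d+1}(x)=x(x-1)\cdots(x-d)$, citing \cite{koekoekbook}), and conclude multiplicity-freeness from the fact that the $d+1$ resulting roots are pairwise distinct for a matrix of size $(d+1)\times(d+1)$. Your extra orthogonality argument for locating the zeros and the alternative Jacobi-matrix argument for simplicity are sound but not needed beyond what the paper does.
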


\begin{proof}
	By Theorem \ref{Krat}, the characteristic polynomial of $A_{r,d}$ is equal to  
	$$
	  f_{d+1}(t)=(2\sqrt{n-1})^{d+1}P_{d+1}\left(\frac{t}{2\sqrt{n-1}}+\frac{d}{2}\right),
	$$
	where $P_{d+1}(t)$ is the normalized Krawtchouk polynomial of degree $d+1$. It is known that the normalized Krawtchouk polynomial of degree $d+1$ factorizes as  $P_{d+1}(x)=x(x-1)\cdots(x-d)$; see for example \cite[p. viii]{koekoekbook}. Pick $0\leq j\leq d$. Then, from Theorem~\ref{Krat} we have that $$f_{d+1}(\sqrt{n-1}(d-2j))=(2\sqrt{n-1})^{d+1}P_{d+1}(d-j)=0.$$
	Therefore, the roots of $f_{d+1}$ are $\{\sqrt{n-1}(d-2j)\,\,;\,\,0\leq j\leq d\}$. As these roots are pairwise different and $A_{r,d}$ has dimension $(d+1) \times (d+1)$, this concludes the proof.
\end{proof}

\begin{theorem}
  \label{thm:eig}
  With reference to Notation \ref{blank}, the matrix $A$ is diagonalizable with eigenvalues $\theta_i \; (0 \le i \le 2D)$, 
  where 
  $$
    \theta_i=\sqrt{n-1}(D-i).
  $$
\end{theorem}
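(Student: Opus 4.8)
The plan is to read the eigenvalues of $A$ directly off its action on the irreducible $T$-modules. First I would recall that $T$ is semisimple (noted just before Lemma~\ref{lem:modules1}), so the standard module $V$ is a direct sum of irreducible $T$-modules; equivalently, $A$ is a real symmetric matrix and hence diagonalizable, so it suffices to determine its spectrum as a set. Next, for an irreducible $T$-module $W$ with endpoint $r$ and diameter $d$ --- which by Lemma~\ref{lem:modules1} exists precisely when $0\le r\le r+d\le D\le 2r+d$ --- Lemma~\ref{lem:modules}(iii) together with the identity $A=L+R$ shows that $A$ acts on $W$ as the tridiagonal matrix $A_{r,d}$ of \eqref{rep}. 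Corollary~\ref{eigen} then gives that $A_{r,d}$ is multiplicity-free with eigenvalue set $\{\sqrt{n-1}(d-2j)\mid 0\le j\le d\}$. Consequently $A$ is diagonalizable and the set of its eigenvalues is exactly $\bigcup\{\sqrt{n-1}(d-2j)\mid 0\le j\le d\}$, the union taken over all admissible pairs $(r,d)$.

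The remaining work is purely arithmetic: I would show that this union equals $\{\sqrt{n-1}(D-i)\mid 0\le i\le 2D\}$, i.e.\ the set $\{\sqrt{n-1}\,m\mid m\in\ZZ,\ |m|\le D\}$ of $2D+1$ distinct scalars. The forward inclusion is immediate, since any admissible pair has $d\le r+d\le D$, whence $|d-2j|\le d\le D$ for $0\le j\le d$, so $\sqrt{n-1}(d-2j)=\sqrt{n-1}(D-i)$ with $i=D-d+2j\in\{0,1,\dots,2D\}$. For the reverse inclusion I would exhibit two admissible pairs whose eigenvalue lists together cover every integer in $[-D,D]$: the pair $(r,d)=(0,D)$ is admissible (the conditions read $0\le 0\le D\le D$) and contributes all $\sqrt{n-1}\,m$ with $m\equiv D\pmod 2$ and $|m|\le D$, while the pair $(r,d)=(1,D-1)$ is admissible (the conditions read $0\le 1\le D\le D+1$) and contributes all $\sqrt{n-1}\,m$ with $m\not\equiv D\pmod 2$ and $|m|\le D-1$. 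Since an integer $m$ with $|m|\le D$ and $m\not\equiv D\pmod 2$ automatically satisfies $|m|\le D-1$, these two lists exhaust the required set, and the theorem follows with $\theta_i=\sqrt{n-1}(D-i)$.

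I do not expect a genuine obstacle here: the substantive input --- thinness of the modules, their classification by $(r,d)$, and the Krawtchouk factorization of the characteristic polynomial of $A_{r,d}$ --- is already packaged in Lemmas~\ref{lem:modules} and \ref{lem:modules1} and Corollary~\ref{eigen}. The one spot that needs a moment's care is the reverse inclusion: one must choose admissible pairs of both diameter parities and check that their arithmetic progressions of eigenvalues interleave to fill all of $[-D,D]$. I would therefore regard that parity bookkeeping as the main (though still routine) point of the argument.
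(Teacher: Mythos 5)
Your proposal is correct and follows essentially the same route as the paper: diagonalizability via the decomposition of $V$ into irreducible $T$-modules, the substitution $i=D-d+2j$ with the bound $d\le D$ for the forward inclusion, and the two modules with $(r,d)=(0,D)$ and $(1,D-1)$ covering the even and odd indices for the reverse inclusion. No substantive differences to report.
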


\begin{proof}
	Observe that the matrix $A$ is diagonalizable because the standard module $V$ is a direct sum of irreducible $T$-modules and $A$ is diagonalizable on each irreducible $T$-module. Let $\theta$ denote an eigenvalue of $A$. We first claim that $\theta=\theta_i$ for some $0 \le i \le 2D$. We mentioned that $V$ is a direct sum of irreducible $T$-modules. So, $\theta$ is an eigenvalue for the action of $A$ on some irreducible $T$-module $W$. Let $d$ denote the diameter of $W$. By Corollary \ref{eigen}, we have that $\theta=\sqrt{n-1}(d-2j) = \sqrt{n-1}(D-(D-d+2j))$ for some $0 \le j \le d$. Let us denote $i=D-d+2j$ and observe that $i \ge 0$ since $j \ge 0$ and $d \le D$ by Lemma \ref{lem:modules1}. On the other hand side, we have that $2j \le 2d \le D+d$, which shows that $i=D-d+2j \le 2D$. This proves the claim.

Conversely, we show that $\theta_i$ is an eigenvalue of $A$ for $0 \le i \le 2D$. Let $W_0$ denote the irreducible $T$-module with endpoint $0$ and diameter $D$. Note that this module exists by Lemma \ref{lem:modules1}. By Corollary \ref{eigen} and since the standard module is a direct sum of irreducible $T$-modules, we have that $\theta_i$ with $0 \le i \le 2D$, $i$ even, are eigenvalues of $A$. Similarly, let $W_1$ denote an irreducible $T$-module with endpoint $1$ and diameter $D-1$. Note that this module exists by Lemma \ref{lem:modules1}. By Corollary \ref{eigen}, the eigenvalues of the matrix $A_{1,D-1}$ are $\sqrt{n-1}(D-1-2j) =\sqrt{n-1} (D-(2j+1))\; (0 \le j \le D-1)$, which are exactly the numbers $\theta_i$ with $0 \le i \le 2D$ and $i$ odd. Since the standard module is a direct sum of irreducible $T$-modules, we have that $\theta_i$ with $0 \le i \le D$, $i$ odd, are eigenvalues of $A$. This finishes the proof.  
\end{proof}
 
 \medskip \noindent 
For the sake of completeness, in our next result, we give the multiplicities of the eigenvalues of $A$.
  
  \begin{proposition}
    With reference to Notation \ref{blank} and Theorem \ref{thm:eig}, the following holds for $0 \le i \le 2D$: the multiplicity $m_i$ of the eigenvalue $\theta_i$ is 
    $$
    m_i=\displaystyle \sum_{\substack{\mid D-i \mid\leq d \leq D\\ {d-D+i \ even}}} \ \sum_{0\leq r\leq D-d}^{} {\rm mult}(r,d),
    $$
    where ${\rm mult}(r,d)$ are as given in \eqref{eq:mult}.
  \end{proposition}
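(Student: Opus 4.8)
The plan is to count, for each fixed eigenvalue $\theta_i = \sqrt{n-1}(D-i)$, how many copies of $\theta_i$ appear when we decompose $A$ on the standard module $V$ into its action on the irreducible $T$-modules. First I would recall the key structural facts already established: $V$ decomposes as a direct sum of irreducible $T$-modules (since $T$ is semisimple, as $A$ is symmetric), each irreducible $T$-module $W$ is thin, and its isomorphism class is completely determined by its endpoint $r$ and diameter $d$ (Lemma~\ref{lem:modules}), so that the number of copies of the $(r,d)$-isomorphism type equals ${\rm mult}(r,d)$ as given in~\eqref{eq:mult}. Moreover, by Corollary~\ref{eigen}, the action $A_{r,d}$ of $A$ on any $(r,d)$-module is multiplicity-free with spectrum $\{\sqrt{n-1}(d-2j) : 0 \le j \le d\}$, which crucially depends only on $d$, not on $r$.

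Next I would determine precisely which pairs $(r,d)$ contribute to the eigenvalue $\theta_i$. An $(r,d)$-module contributes $\theta_i = \sqrt{n-1}(D-i)$ to the spectrum of $A$ exactly when $D-i = d-2j$ for some integer $j$ with $0 \le j \le d$; equivalently, when there exists $j$ with $j = (d-D+i)/2$ and $0 \le j \le d$. This requires: (a) $d - D + i$ is even, i.e. $d \equiv D - i \pmod 2$; (b) $j \ge 0$, i.e. $d \ge D-i$; and (c) $j \le d$, i.e. $d-D+i \le 2d$, i.e. $d \ge D-i$ again when $i \le D$, but more symmetrically $i - D \le d$, so combining, $|D-i| \le d$. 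Together with the existence constraint $d \le D$ from Lemma~\ref{lem:modules1}, this yields exactly the range $|D-i| \le d \le D$ with $d \equiv D-i \pmod 2$ appearing in the summation index. When such a $j$ exists it is unique (since $A_{r,d}$ is multiplicity-free), so each contributing $(r,d)$-module supplies exactly one copy of $\theta_i$.

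Then I would also need to pin down the admissible range of $r$ for each such $d$. By Lemma~\ref{lem:modules1}, an irreducible $T$-module with endpoint $r$ and diameter $d$ exists if and only if $0 \le r \le r+d \le D \le 2r+d$; the condition $r+d \le D$ gives $r \le D-d$, and $D \le 2r+d$ gives $r \ge (D-d)/2$, while $r \ge 0$ is automatic. Summing ${\rm mult}(r,d)$ over all existing $(r,d)$-modules with the above constraints on $d$, and recalling that each such module contributes exactly one copy of $\theta_i$ (since, as noted, the spectrum of $A_{r,d}$ depends only on $d$), gives the multiplicity
$$
m_i = \sum_{\substack{|D-i| \le d \le D \\ d - D + i \text{ even}}}\ \sum_{0 \le r \le D-d} {\rm mult}(r,d).
$$
Here I am using the convention that ${\rm mult}(r,d) = 0$ whenever the existence criterion $D \le 2r+d$ fails, so one may harmlessly write the inner sum over the full range $0 \le r \le D-d$.

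The argument is essentially bookkeeping once Corollary~\ref{eigen}, Lemma~\ref{lem:modules}, and Lemma~\ref{lem:modules1} are in hand; the only mildly delicate point — the main thing to get right — is the translation of the arithmetic conditions ``$\exists\, j$ with $0 \le j \le d$ and $d-2j = D-i$'' into the clean parity-and-range constraint $|D-i| \le d \le D$, $d \equiv D-i \pmod 2$, and verifying that in this range the value of $j$ is unique so that no $(r,d)$-module is overcounted. I would write these inequalities out carefully, treating the cases $i \le D$ and $i \ge D$ (or simply using $|D-i|$ throughout), and then conclude by linearity of the eigenspace-dimension count over the direct-sum decomposition of $V$.
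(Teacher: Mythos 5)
Your argument is correct and follows essentially the same route as the paper: decompose the standard module into irreducible $T$-modules, invoke Corollary~\ref{eigen} to see that each $(r,d)$-module contributes $\theta_i$ exactly once precisely when $j=(d-D+i)/2$ is an integer in $[0,d]$, and sum ${\rm mult}(r,d)$ over the resulting pairs. Your extra care in translating the condition on $j$ into the range $|D-i|\le d\le D$ with the parity constraint, and in justifying the inner sum over $0\le r\le D-d$ via the vanishing convention for ${\rm mult}(r,d)$ when $2r+d<D$, only makes explicit what the paper leaves implicit.
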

  
  \begin{proof}
  	Recall the matrices $A_{r,d}$ defined in \eqref{rep}. Observe that $\theta_i$ is an eigenvalue of $A_{r,d}$ if and only if there exists an integer $j \; (0 \le j \le d)$, such that $D-i=d-2j$, or equivalently, $j=\frac{d-D+i}{2}$. Since the standard module is a direct sum of irreducible $T$-modules and the matrices $A_{r,d} \; (0 \le r \le r+d \le D \le 2r+d)$ are multiplicity-free by Corollary \ref{eigen}, the multiplicity of $\theta_i$ will be equal to 
  	$$
  	  \sum {\rm mult}(r,d),
    $$
    where the sum is over all pairs $(r,d)$ with $0 \le r \le r+d \le D \le 2r+d$ and such that $\theta_i$ is an eigenvalue of $A_{r,d}$. The result follows.
  \end{proof}


\section{A $Q$-polynomial structure for $A$}
\label{sec:main}

With reference to Notation \ref{blank}, in our last section, we prove that the matrix $A^*$ is actually a dual adjacency matrix of $A$ (with respect to certain orderings of the primitive idempotents of $A$). To do that, first recall the eigenvalues 
$$
  \theta_i = \sqrt{n-1}(D-i) \qquad (0 \le i \le 2D)
$$
of the matrix $A$. Let $V_i$ denote the eigenspace of $\theta_i$ and let $E_i$ denote the corresponding primitive idempotent. The following (i)--(iv) are well known.
\begin{itemize}
	\item[(i)] $E_iE_j=\delta_{ij}E_i \qquad (0\leq i,j\leq 2D)$, 
	\item[(ii)] $\displaystyle\sum_{i=0}^{2D} E_i=I$,
	\item[(iii)] $E_iA=\theta_iE_i =A E_i\qquad (0\leq i\leq 2D)$,
	\item[(iv)] $A=\displaystyle\sum_{i=0}^{2D}\theta_iE_i$.
\end{itemize}
Recall also that for notational convenience we assume that $E_i=0$ (and therefore also $V_i=\emptyset$) for $i < 0$ and $i > 2D$.

\begin{proposition}\label{zeroindex}
	With reference to Notation \ref{blank} and the notation above, we have that $E_{i}A^*E_j=0$ for $0\leq i,j\leq 2D$ and $| i-j | \notin\{0,2\}$. 
\end{proposition}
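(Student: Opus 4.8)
The plan is to exploit the cubic tridiagonal relation of Theorem~\ref{thm:cubic} together with the spectral decomposition $A=\sum_{i}\theta_i E_i$. First I would multiply the relation \eqref{eqtrid} on the left by $E_i$ and on the right by $E_j$. Using property (iii) above, namely $E_i A = \theta_i E_i$ and $A E_j = \theta_j E_j$, each term $A^a A^* A^b$ becomes $\theta_i^a \theta_j^b\, E_i A^* E_j$, so the whole expression collapses to a scalar multiple of $E_i A^* E_j$. Concretely, the left-hand side gives
\[
\bigl(\theta_i^3 - \theta_j^3 + 3(\theta_i\theta_j^2 - \theta_i^2\theta_j)\bigr)\,E_i A^* E_j,
\]
and the right-hand side gives $4(n-1)(\theta_i - \theta_j)\,E_i A^* E_j$. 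Hence $E_i A^* E_j = 0$ whenever the scalar coefficient
\[
p(\theta_i,\theta_j) := \theta_i^3 - \theta_j^3 + 3\theta_i\theta_j^2 - 3\theta_i^2\theta_j - 4(n-1)(\theta_i - \theta_j)
\]
is nonzero.

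Next I would factor $p(\theta_i,\theta_j)$. Writing $s=\theta_i-\theta_j$ and noting $\theta_i^3 - \theta_j^3 + 3\theta_i\theta_j^2 - 3\theta_i^2\theta_j = (\theta_i-\theta_j)^3 = s^3$ after checking the binomial identity (indeed $(\theta_i - \theta_j)^3 = \theta_i^3 - 3\theta_i^2\theta_j + 3\theta_i\theta_j^2 - \theta_j^3$), the coefficient becomes $s^3 - 4(n-1)s = s\bigl(s^2 - 4(n-1)\bigr) = s\bigl(s - 2\sqrt{n-1}\bigr)\bigl(s + 2\sqrt{n-1}\bigr)$. Now substitute $\theta_i - \theta_j = \sqrt{n-1}\,\bigl((D-i) - (D-j)\bigr) = \sqrt{n-1}\,(j-i)$. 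Thus $s = 0$ iff $i = j$; $s = 2\sqrt{n-1}$ iff $j-i=2$; and $s = -2\sqrt{n-1}$ iff $j-i=-2$, i.e. $i-j=2$. Therefore the coefficient vanishes exactly when $|i-j|\in\{0,2\}$, and for all other pairs $(i,j)$ with $0\le i,j\le 2D$ we conclude $E_i A^* E_j = 0$, which is the claim.

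I do not anticipate a genuine obstacle here; the only care needed is bookkeeping. One should verify that the reduction is valid even when $E_i$ or $E_j$ is zero (i.e. for $i<0$ or $i>2D$), where the statement is vacuous, and one should double-check the arithmetic of the factorization and the substitution $\theta_i-\theta_j = \sqrt{n-1}(j-i)$ so that the three vanishing cases line up precisely with $|i-j|\in\{0,2\}$ rather than some shifted set. It is also worth remarking that this proposition is exactly the first half of the ``block-tridiagonal'' behaviour required by Definition~\ref{def:q1}: combined with $E_i A^* E_{i+1}$ and $E_i A^* E_{i-1}$ possibly vanishing, it shows $A^*$ sends $E_i V$ into $E_{i-2}V + E_{i-1}V + E_iV + E_{i+1}V + E_{i+2}V$, and a subsequent reordering/splitting of the idempotents (separating even from odd indices, matching the bipartite structure) will be needed to land in the three consecutive blocks demanded by the $Q$-polynomial definition.
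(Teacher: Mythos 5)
Your proposal is correct and follows essentially the same route as the paper: multiply the tridiagonal relation \eqref{eqtrid} by $E_i$ on the left and $E_j$ on the right, collapse it to the scalar $(\theta_i-\theta_j)\bigl((\theta_i-\theta_j)^2-4(n-1)\bigr)$ times $E_iA^*E_j$, and observe via $\theta_i-\theta_j=\sqrt{n-1}(j-i)$ that this scalar vanishes exactly when $|i-j|\in\{0,2\}$. The factorization and case analysis match the paper's proof of Proposition~\ref{zeroindex}.
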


\begin{proof}
	Multiplying the equation \eqref{eqtrid} on the left by $E_i$ and on the right by $E_j$ and using the above-mentioned properties of primitive idempotents, we obtain
	\begin{equation*}
		(\theta_i-\theta_j)  \left((\theta_i-\theta_j)^2-4(n-1)\right)E_iA^*E_j=0.
	\end{equation*}
	Since we have 
	\begin{equation*}
(\theta_i-\theta_j)  \left((\theta_i-\theta_j)^2-4(n-1)\right) = \sqrt{n-1}(n-1)(i-j)(i-j-2)(i-j+2),
	\end{equation*}
the result follows.
\end{proof}

\begin{lemma}\label{tridact}
With reference to Notation \ref{blank} and the notation above, the following holds:
	$$
	  A^* V_i\subseteq V_{i-2}+V_i+V_{i+2}\qquad (0\leq i\leq 2D).
	$$
\end{lemma}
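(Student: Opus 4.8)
The plan is to deduce this directly from Proposition~\ref{zeroindex}. Fix $i$ with $0 \le i \le 2D$. Since $V = \sum_{j=0}^{2D} V_j$ is a direct sum (items (i),(ii) above), every vector $v \in V_i$ satisfies $A^* v = \sum_{j=0}^{2D} E_j A^* v$, and because $v \in V_i$ we have $v = E_i v$, so $E_j A^* v = E_j A^* E_i v$. By Proposition~\ref{zeroindex}, $E_j A^* E_i = 0$ unless $|i-j| \in \{0,2\}$, i.e.\ unless $j \in \{i-2, i, i+2\}$. Hence $A^* v = E_{i-2} A^* v + E_i A^* v + E_{i+2} A^* v \in V_{i-2} + V_i + V_{i+2}$, which is exactly the claimed inclusion. (Recall the convention that $E_j = 0$ and $V_j = \emptyset$ for $j < 0$ or $j > 2D$, so the boundary cases $i \in \{0,1,2D-1,2D\}$ are automatically handled.)

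There is essentially no obstacle here: the content is entirely in Proposition~\ref{zeroindex} (which in turn rests on the tridiagonal relation of Theorem~\ref{thm:cubic}), and the present lemma is just the translation of the statement ``$E_j A^* E_i = 0$ for $|i-j| \notin \{0,2\}$'' into the language of eigenspaces via the resolution of the identity $\sum_j E_j = I$. The only thing to be careful about is making sure the indices $i \pm 2$ that fall outside $[0,2D]$ are interpreted as contributing the zero space, which the stated convention guarantees.

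\begin{proof}
	Fix $i$ with $0 \le i \le 2D$ and pick $v \in V_i$, so that $E_i v = v$. Using items (i) and (ii) above we have
	$$
	  A^* v = \sum_{j=0}^{2D} E_j A^* v = \sum_{j=0}^{2D} E_j A^* E_i v.
	$$
	By Proposition~\ref{zeroindex}, $E_j A^* E_i = 0$ whenever $|i-j| \notin \{0,2\}$; the only surviving terms are therefore those with $j \in \{i-2, i, i+2\}$. Since $E_j A^* E_i v \in E_j V = V_j$ for each $j$, it follows that
	$$
	  A^* v \in V_{i-2} + V_i + V_{i+2}.
	$$
	(Here we use the convention that $V_j = \emptyset$ for $j < 0$ or $j > 2D$, so that the terms $V_{i-2}$ and $V_{i+2}$ are understood to be $0$ when $i \le 1$ or $i \ge 2D-1$.) As $v \in V_i$ was arbitrary, we conclude that $A^* V_i \subseteq V_{i-2} + V_i + V_{i+2}$.
\end{proof}
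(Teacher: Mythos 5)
Your proof is correct and follows essentially the same route as the paper: apply the resolution of the identity $\sum_j E_j = I$ to $A^* E_i V$ and use Proposition~\ref{zeroindex} to kill all terms with $|i-j| \notin \{0,2\}$. The handling of the boundary indices via the convention $E_j = 0$ for $j$ out of range matches the paper as well.
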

\begin{proof}
	Using the above properties of the primitive idempotents $E_i$ together with the result from Proposition~\ref{zeroindex}, we get
	\begin{align*}
		A^*V_i = A^* E_i V=\sum_{j=0}^{2D}E_j A^* E_i V=& \ E_{i-2}A^*E_iV+E_iA^*E_iV_i+E_{i+2}A^*E_iV_i\\ \subseteq& \ V_{i-2}+V_i+V_{i+2}.
	\end{align*}
\end{proof}
We are now ready to prove our main result.
\begin{theorem}
	\label{thm:main}
	With reference to Notation \ref{blank} and the notation above, the matrix $A^*$ is a dual adjacency matrix of $\Ga$ with respect to $x$ and with respect to the following orderings of primitive idempotents:
	\begin{itemize}
		\item[(i)] $E_0 < E_2 < \cdots < E_{2D} <E_1 <E_3 <\cdots <E_{2D-1}$;
		\item[(ii)] $E_1 < E_3 < \cdots < E_{2D-1} <E_0 <E_2 <\cdots <E_{2D}$.
	\end{itemize}
\end{theorem}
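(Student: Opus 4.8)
The plan is to check the two defining conditions in Definition~\ref{def:q1}: that $A^*$ generates $M^*$, and that $A^*$ acts block-tridiagonally on the eigenspaces of $A$ with respect to each of the two prescribed orderings of primitive idempotents. The first condition is exactly Proposition~\ref{generates}, so the entire burden is the second. Note that the relevant index set has size $2D+1$, since by Theorem~\ref{thm:eig} the matrix $A$ has the $2D+1$ distinct eigenvalues $\theta_0,\dots,\theta_{2D}$ (and one checks immediately that each of the two lists in (i), (ii) is a genuine ordering of all $2D+1$ primitive idempotents: $E_0,E_2,\dots,E_{2D}$ accounts for $D+1$ of them and $E_1,E_3,\dots,E_{2D-1}$ for the remaining $D$). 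Thus ``tridiagonal'' means $A^* E_iV\subseteq E_{i-1}V+E_iV+E_{i+1}V$ for $0\le i\le 2D$, with the $E_i$ taken in the ordering under consideration.

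The key input is Lemma~\ref{tridact} (equivalently Proposition~\ref{zeroindex}): in the natural labelling $\theta_i=\sqrt{n-1}(D-i)$ one has $A^*V_i\subseteq V_{i-2}+V_i+V_{i+2}$. In particular $A^*$ connects only eigenspaces whose natural indices differ by $0$ or $2$, and so never mixes eigenspaces of opposite natural parity. For ordering (i), relabel $V_0,V_2,\dots,V_{2D}$ as the new $0$th through $D$th eigenspaces and $V_1,V_3,\dots,V_{2D-1}$ as the new $(D{+}1)$st through $(2D)$th. Under this relabelling two eigenspaces with natural index difference $2$ become adjacent, those with difference $0$ stay fixed, and hence $A^*$ sends each new $E_iV$ into $E_{i-1}V+E_iV+E_{i+1}V$: for an interior index of the even block this is immediate from Lemma~\ref{tridact}; at the two ends of the even block the inclusion is in fact strictly stronger than tridiagonal because $V_{-2}=V_{2D+2}=0$; and the new indices $D$ and $D+1$ correspond to $V_{2D}$ and $V_1$, which have opposite natural parity, so $A^*$ does not connect them and there is no interaction across the block boundary. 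The odd block is handled identically. This gives condition (ii) of Definition~\ref{def:q1} for ordering (i), and together with Proposition~\ref{generates} it shows $A^*$ is a dual adjacency matrix with respect to ordering (i). Ordering (ii) is ordering (i) with the even and odd blocks interchanged, so the same argument applies verbatim.

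I do not expect a serious obstacle: the substance has been front-loaded into Theorem~\ref{thm:cubic}, Proposition~\ref{zeroindex}, and Lemma~\ref{tridact}, and what remains is bookkeeping about the reindexing. The one point needing a moment's care is the junction between the two parity blocks in each ordering, namely that $A^*$ does not create a jump of index difference $\ge 2$ there; but this is automatic, since $E_iA^*E_j=0$ whenever $i$ and $j$ have opposite natural parity (their natural difference is odd, hence not in $\{0,2\}$, so Proposition~\ref{zeroindex} applies). An alternative and perhaps cleaner write-up phrases everything through the vanishing relations $E_iA^*E_j=0$ for $|i-j|>1$ in the new ordering and then deduces the subspace inclusion from $\sum_i E_i=I$ via $A^*E_jV=\sum_i E_iA^*E_jV$, which avoids treating the ends of the blocks as separate cases altogether.
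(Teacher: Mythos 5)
Your proposal is correct and follows essentially the same route as the paper, whose entire proof is the one-line observation that the result follows from Proposition~\ref{generates} and Lemma~\ref{tridact}. You have simply made explicit the reindexing bookkeeping (and the parity argument at the junction of the two blocks) that the paper leaves implicit.
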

\begin{proof}
The result follows immediately from  Proposition \ref{generates} and Lemma \ref{tridact}.
\end{proof}

\begin{corollary}
With reference to Notation \ref{blank} and the notation above, the adjacency matrix $A$ of  $\Ga$ is $Q$-polynomial.
\end{corollary}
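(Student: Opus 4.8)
The plan is to obtain the Corollary as an immediate consequence of Theorem~\ref{thm:main} together with the chain of Definitions~\ref{def:q1}--\ref{def:q3}. By Theorem~\ref{thm:main}, the diagonal matrix $A^*$ introduced in Notation~\ref{blank} is a dual adjacency matrix of $\Ga$ with respect to $x$ and, say, the ordering (i) $E_0 < E_2 < \cdots < E_{2D} < E_1 < E_3 < \cdots < E_{2D-1}$ of the primitive idempotents of $A$. So the only thing to do is to unwind what this means.

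First I would recall that Definition~\ref{def:q1} requires two things of $A^*$: that it generate the dual adjacency algebra $M^*$, and that it act in a block tridiagonal fashion on the eigenspaces of $A$ listed in the chosen order. The first requirement is exactly Proposition~\ref{generates}. The second is the content of Lemma~\ref{tridact} (equivalently Proposition~\ref{zeroindex}): in the natural labeling $A^* V_i \subseteq V_{i-2}+V_i+V_{i+2}$, and once the idempotents are relabeled according to ordering (i) — which interleaves the even-indexed and odd-indexed eigenspaces so that a ``gap $2$'' in the original labeling becomes a ``gap $1$'' in the new one — this becomes precisely $A^* E_i V \subseteq E_{i-1}V + E_i V + E_{i+1}V$ for all $i$. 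Hence $A^*$ is a dual adjacency matrix in the sense of Definition~\ref{def:q1}, which is exactly Theorem~\ref{thm:main}.

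Now the deduction of the Corollary is purely definitional: by Definition~\ref{def:q2}, the existence of a dual adjacency matrix of $\Ga$ with respect to $x$ and the ordering (i) says precisely that this ordering of the primitive idempotents of $A$ is $Q$-polynomial with respect to $x$; and by Definition~\ref{def:q3}, the existence of some $Q$-polynomial ordering of the primitive idempotents of $A$ is exactly the assertion that $A$ is $Q$-polynomial with respect to $x$. Suppressing the reference to the particular vertex $x$ (such a vertex exists), we conclude that $A$ is $Q$-polynomial, as claimed.

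There is no real obstacle at this final step; all of the substantive work has already been carried out. If one wants to point to where the difficulty actually lay, it is Theorem~\ref{thm:main}, which in turn rests on the tridiagonal relation of Theorem~\ref{thm:cubic} — itself a consequence of the uniformity of $\Ga$ via Lemma~\ref{lem:uniform} — and on the observation in Proposition~\ref{zeroindex} that the resulting gap structure $|i-j|\in\{0,2\}$ for $E_iA^*E_j$ collapses to nearest-neighbor form under the interleaved ordering (i) (and symmetrically under (ii)).
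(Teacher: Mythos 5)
Your proposal is correct and follows exactly the paper's route: the corollary is deduced immediately from Theorem~\ref{thm:main} by unwinding Definitions~\ref{def:q1}--\ref{def:q3}, which is precisely what the paper does (its proof reads ``Immediately from Theorem~\ref{thm:main}''). The extra detail you supply about the interleaved ordering turning the gap-$2$ structure into a nearest-neighbor one is accurate but belongs to the proof of Theorem~\ref{thm:main} rather than to this final step.
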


\begin{proof}
	Immediately from Theorem \ref{thm:main}.
\end{proof}

 \end{document}